\newtheorem{theorem}{Theorem}[section]
\newtheorem{lemma}[theorem]{Lemma}
\newtheorem{prop}[theorem]{Proposition}
\newtheorem{conjecture}{Conjecture}
\newtheorem{problem}[conjecture]{Problem}
\newcommand{\id}{e}
\newcommand{\R}{{\mathbb R}}
\newcommand{\rodl}{R\"odl}
\newcommand{\kriz}{K\v{r}\'i\v{z}}
\newcommand{\erdos}{Erd\H os}
\newcommand{\mono}{monochromatic}
\newcommand{\gv}{\vec{g}}
\newcommand{\sy}{\times}
\newcommand{\gih}{\gv\sy_I h}
\newcommand{\Wlog}{wlog}
\newcommand{\piv}{\vec{\pi}}
\newcommand{\pis}{\piv\sy_I\sigma}
\newcommand{\N}{{\mathbb N}}
\newcommand{\p}{\mathcal{P}}
\newcommand{\Q}{\mathcal{Q}}
\newcommand{\ap}{arithmetic progression}
\newcommand{\fp}{subtransitive}
\title{Transitive Sets in
Euclidean Ramsey Theory}
\author{Imre Leader\footnote{Department
of Pure Mathematics and Mathematical Statistics,
Centre for Mathematical Sciences,
Wilberforce Road,
Cambridge CB3 0WB,
England.} \footnote{{\tt I.Leader@dpmms.cam.ac.uk}}\and 
Paul A.~Russell\footnotemark[1] \footnote{{\tt P.A.Russell@dpmms.cam.ac.uk}}
\and Mark Walters \footnote{School of Mathematical Sciences, Queen Mary,
University of London, London E1 4NS, England} 
\footnote{\tt m.walters@qmul.ac.uk}}
\begin{document}
\maketitle

\begin{abstract}
A finite set $X$ in some Euclidean space $\R^n$ is called {\it Ramsey}
if for any $k$ there is a $d$ such that whenever $\R^d$ is $k$-coloured
it contains a \mono\ set congruent to $X$.  This notion was introduced
by \erdos, Graham, Montgomery, Rothschild, Spencer and Straus,
who asked if a set is Ramsey if and only if it is
{\it spherical,} meaning that it lies on the surface of a sphere.
This question (made into a conjecture by Graham) has dominated subsequent 
work in Euclidean Ramsey theory.

In this paper we introduce a new conjecture regarding which sets are Ramsey;
this is the first ever `rival' conjecture to the conjecture above.
Calling a finite set {\it transitive} if its symmetry group acts
transitively---in other words, if all points of the set look the same---our
conjecture is that the Ramsey sets are precisely the transitive
sets, together with their subsets.
One appealing feature of this conjecture is that it reduces
(in one direction) to a purely combinatorial statement.  We give this statement
as well as several other related conjectures.  We also prove the first
non-trivial cases of the statement.

Curiously, it is far from obvious that our new conjecture is genuinely 
different from the old.  We show that they are indeed different by proving
that not every spherical set embeds in a transitive set.  This result
may be of independent interest.
\end{abstract}

\begin{section}{Introduction}\label{intro}
Euclidean Ramsey theory originates in the sequence of papers \cite{egmrss},
\cite{egmrss2} and \cite{egmrss3} by \erdos, Graham, Montgomery,
Rothschild, Spencer and Straus.
A finite set $X$ in some Euclidean space $\R^n$ 
is said to be {\it Ramsey} if for every 
positive integer $k$ there exists a positive integer $d$ such that
whenever $\R^d$ is $k$-coloured it must contain a \mono\ subset congruent to
$X$.  For example, it is easy to see that (the set of vertices of) the
$r$-dimensional
regular simplex is Ramsey:  $\R^{kr}$ contains a collection of $kr+1$
points with each pair at distance $1$, and whenever $\R^{kr}$ is $k$-coloured
some $r+1$ of these points must be the same colour.  On the other hand,
the subset $\{0,1,2\}$ of $\R$ is not Ramsey.  
To see this, observe that a copy of $\{0,1,2\}$ is simply a collection
of three collinear points $x$, $y$, $z\in\R^n$ with $\|z-x\|=2$ and
$y={1\over2}(x+z)$.  It follows from the parallelogram law that
$\|y\|^2={1\over2}(\|x\|^2+\|z\|^2)-1$.  It is now easy to write down
a $4$-colouring of $\R^n$ with no \mono\ copy of $\{0,1,2\}$ by colouring
each point $u$ according to its distance $\|u\|$ from the origin---for
example by $c(u)=\lfloor\|u\|^2\rfloor$ (mod $4$).  

So which finite sets are Ramsey?  This question was first considered by
\erdos, Graham, Montgomery, Rothschild, Spencer and Straus in \cite{egmrss}.
They showed that any Ramsey set must be {\it spherical;} that is, it must be
contained in the surface of a sphere.  Their
proof can be viewed as a (much more difficult) extension of the proof
above that $\{0,1,2\}$ is not Ramsey.
So the key question remaining was:  is {\it every} spherical set Ramsey?
It has been widely believed for some time 
that this is in fact the case---indeed,
Graham conjectured this in \cite{conj} and offered \$1000 for its proof.

Various cases of this conjecture have been proved.  In the original paper
\cite{egmrss} it is shown, by means of a product argument, that if $X$ and
$Y$ are Ramsey then so is $X\times Y$.  (Here if $X\subset\R^n$ and
$Y\subset\R^m$ then we regard $X\times Y$ as a subset of $\R^{n+m}$.)
In particular, it follows that
any {\it brick,} meaning the set of vertices of a cuboid in $n$ dimensions, 
is Ramsey.
Further progress has been slow, with each new step requiring significant
new ideas.
Frankl and \rodl\ \cite{triangles} showed that every triangle is Ramsey,
and later \cite{simplices} that
every non-degenerate simplex is Ramsey.
This left the next interesting case as the regular pentagon, which was
shown to be Ramsey by \kriz\ \cite{kriz}. 
\kriz\ actually showed that
any finite set
$X$ which is acted on transitively by a soluble group $G$ of isometries is
Ramsey and, slightly more generally, that $G$ need not itself be soluble
so long as it has a soluble subgroup $H$ whose action on $X$ has at most two
orbits.  
In particular, this implies that all regular polygons are Ramsey
and that the Platonic solids in $3$ dimensions are Ramsey.  
In addition, this result was used by Cantwell~\cite{cantwell} to show
that the $120$-cell, the largest regular polytope in $4$ dimensions,
is Ramsey.
However, the conjecture itself is still wide open.  Indeed,
it is not even known whether or not every cyclic quadrilateral is Ramsey.

Our starting point in this paper is a feature that we have observed to be
common to all known proofs that particular sets are Ramsey.  In each
case, the proof that the set is Ramsey proceeds by first embedding
it in a (finite) {\it transitive} set---a set whose symmetry group acts
transitively---and then making some clever combinatorial argument to show
that this transitive set has the Ramsey property required.  

As an example, let us digress for a moment to see why every triangle embeds
into a transitive set.
Of course, a right-angled triangle is a subset of a
rectangle, and more
generally any acute-angled triangle is a subset of a
cuboid in 3 dimensions.
For a general triangle $ABC$, consider a variable point
$D$ on the perpendicular
dropped from $C$ to $AB$. Choose $D$ such that the angle
$AOB$, where $O$ is the
circumcentre of triangle $ABD$, is a rational multiple
of $\pi$. It follows that 
$A$ and $B$ lie on some regular polygon $\Pi$ (with centre
$O$). Viewing $\Pi$ as
living in the $xy$-plane, we now form a new copy $\Sigma$ of $\Pi$
by translating $\Pi$ 
in the $z$-direction and rotating it about its centre:
the 
resulting `twisted prism' $\Pi \cup \Sigma$ is transitive and
will, if the translation
and rotation are chosen correctly, contain an
isometric copy of $ABC$.

We mention that the transitive sets in which Frankl and \rodl\ \cite{triangles}
embed their triangles are in fact very different.
Indeed, the actual 
machinery for embedding a set into a transitive set and proving this
transitive set Ramsey can differ greatly from paper to paper, 
and the transitive
set can have a {\it much} higher dimension than the original set, but we have
noticed that the transitivity is always present.  Based on this, and some other
facts (discussed below), 
we are led to the following conjecture which asserts that transitivity is
the key property.

\begin{conjecture}\label{main}
A finite set $X\subset\R^n$ is Ramsey if and only if it is (congruent
to) a subset of a finite transitive set.
\end{conjecture}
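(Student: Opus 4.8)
\medskip
\noindent\textbf{Proof proposal.}
We would attack the two directions of Conjecture~\ref{main} by quite different routes. Calling a finite set \emph{\fp} if it is congruent to a subset of a finite transitive set --- so that the conjecture reads ``Ramsey $=$ \fp'' --- consider first the implication ``\fp\ $\Rightarrow$ Ramsey''. Since any \mono\ congruent copy of a set $T$ contains a \mono\ congruent copy of each of its subsets, a subset of a Ramsey set is again Ramsey, and it therefore suffices to prove that \emph{every finite transitive set is Ramsey}. The plan is to strip away the geometry: realise the transitive set as an orbit $T=\{gv:g\in G\}$ of a finite group $G$ of isometries (so that, as a $G$-set, $T\cong G/H$ with $H$ the stabiliser of $v$); pass to a high power $T^m\subseteq\R^{nm}$; and search inside $T^m$ for combinatorial configurations in the spirit of the Hales--Jewett theorem --- ``free-coordinate'' subspaces over the alphabet $G/H$, with the wildcard respecting the $G$-action --- that every $k$-colouring is forced to contain a \mono\ copy of. The subtle point, which the bare Hales--Jewett argument leaves unresolved, is that a plain combinatorial line yields only a \emph{rescaled} copy of $T$, with no control on the scale; the extra work is to use the group structure of $G$, together with a Gallai-type (multidimensional van der Waerden) argument, to synchronise the free coordinates and pin the scale down, landing a genuinely congruent copy. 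Carried out correctly, this reduces the whole implication to a purely combinatorial Ramsey statement about $k$-colourings of $G^m$ (equivalently, of a Hales--Jewett cube over the alphabet $G/H$) --- the combinatorial conjecture we are led to. The first non-trivial cases --- recovering \kriz's soluble case, and then pushing a little beyond it --- we would hope to obtain by combining the usual tower / van der Waerden machinery with a careful analysis of the orbit structure of $G$ and its subgroups.

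For the reverse implication ``Ramsey $\Rightarrow$ \fp'' one wants the contrapositive: if $X$ is not congruent to a subset of any finite transitive set, then for every $d$ there is a $k$ and a $k$-colouring of $\R^d$ with no \mono\ congruent copy of $X$. The template is the proof in \cite{egmrss} that a non-spherical set is not Ramsey: there an affine dependence among the points of $X$ that is not ``balanced on a sphere'' forces, via the parallelogram law, the quantity $\sum_i\lambda_i\|x_i\|^2$ to a fixed nonzero value, and colouring $u$ by a function of $\|u\|$ then destroys every copy. One would look for the analogous numerical obstruction attached to the failure of $X$ to be \fp\ --- some invariant of the Gram and automorphism data of $X$ that every congruent copy is constrained to realise --- and then a colouring (built, say, from exponential sums or from suitable coordinate functions) defeating it. This is the main obstacle, and we do not expect to settle it in general. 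Note that for non-spherical $X$ the implication is automatic, since such $X$ are already not Ramsey by \cite{egmrss} (every finite transitive set lies on the sphere about its centroid, hence is spherical); so all of the content is in the spherical case, where no set whatsoever is currently known to fail to be Ramsey. What is nonetheless within reach, and what we would carry out, is to \emph{distinguish} Conjecture~\ref{main} from Graham's conjecture by exhibiting a concrete spherical set that is not \fp, thereby showing the two conjectures to be genuinely different --- a point of independent interest, since it is the first rival to the ``Ramsey $=$ spherical'' conjecture.

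In short: the ``\fp\ $\Rightarrow$ Ramsey'' half reduces to combinatorics, where concrete theorems can be proved; the ``Ramsey $\Rightarrow$ \fp'' half carries essentially all of the difficulty of Euclidean Ramsey theory and remains wide open.
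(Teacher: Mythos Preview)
This statement is the paper's central \emph{conjecture}, not a theorem: the paper does not prove it, and for the ``Ramsey $\Rightarrow$ \fp'' direction it explicitly says (\S\ref{conclude}) ``we have no results at all''. Your proposal is therefore a research programme rather than a proof, and as such it tracks the paper's own programme closely.

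For ``\fp\ $\Rightarrow$ Ramsey'' you reduce to transitive sets, pass to a high power, and seek a Hales--Jewett-type statement over the group --- exactly the chain of Conjectures~\ref{products}--\ref{uniform} developed in \S\ref{if}. Your diagnosis of the scaling obstacle is correct (a bare combinatorial line with $d$ active coordinates yields a copy of $T$ scaled by $\sqrt d$), but the paper's remedy is different from your suggested Gallai-type synchronisation: it imposes a \emph{fixed-degree} condition --- the parameter $d$ in Conjectures~\ref{groups}--\ref{uniform} is required to depend only on $k$, not on the colouring --- which pins down the scale in advance. The ``first non-trivial cases'' the paper actually proves (Theorem~\ref{00112}) are for templates of the form $1\ldots12\ldots23$, via Ramsey's theorem and van der Waerden; this is somewhat orthogonal to your stated aim of recovering \kriz's soluble case.

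For the reverse direction, the paper's heuristics in \S\ref{conclude} go via the structure of minimal $k$-Ramsey sets rather than your proposed numerical invariant modelled on the non-spherical argument, but neither line is developed into a result. Your plan to separate the conjecture from Graham's by exhibiting a spherical non-\fp\ set is precisely what \S\ref{gons} carries out (non-constructively, for cyclic $k$-gons with $k\geqslant16$).

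In short: your outline matches the paper's programme in spirit and in most particulars, with only minor differences of mechanism, and the conjecture itself remains open.
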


In general, when we say that a set $X$ is transitive we shall assume
implicitly that $X$ is finite.
For brevity, we shall say that a finite set in $\R^d$ is {\it subtransitive}
if it is congruent to a subset of a 
transitive set in some $\R^n$.  We stress
that this transitive set may have higher dimension than the
original subtransitive set.

We believe that this is a very natural conjecture for various reasons.
To begin with, it turns out that there are several clean statements any of
which would imply that all transitive sets are Ramsey.  Moreover, some
of these statements are purely combinatorial.  In the other direction, the
transitivity of every Ramsey set would give a clear conceptual reason as to why
every Ramsey set must be spherical.  Indeed, it is easy to see that the points
of any transitive set all lie on the surface of the unique smallest closed ball
containing it.

We remark that it is not clear {\it a priori} that Conjecture \ref{main}
is genuinely different from the old conjecture:  
could it be that a finite set $X$ is spherical
if and only if it is \fp?  As we remark above,
every \fp\ set is spherical, but do
there exist spherical sets that are not \fp?  
We show that such sets do indeed exist.
This result may be of independent interest.
One might hope that the proof of this result would give some insight into
showing the existence of a spherical set that is not Ramsey.  However,
we do not see a way to make this work.

The plan of the paper is as follows.
In \S\ref{if}, we consider the `if' direction of our conjecture:
`every subtransitive set is Ramsey', or, equivalently,
`every transitive set is Ramsey'.
Our first step is to remove the geometry, so to say:  we show that this 
direction of
Conjecture \ref{main} would follow from a Hales-Jewett-type statement
for groups (Conjecture \ref{groups}).  Our next step is to remove the group
theory by showing that Conjecture
\ref{groups} can be reformulated as another equivalent Hales-Jewett-type
statement which is purely combinatorial (Conjecture \ref{patterns}).
This shows that our conjecture is, in a certain sense, natural---if Conjecture
\ref{patterns} is true then it will provide a genuine combinatorial reason
why every transitive set must be Ramsey.
In \S\ref{cases} we
prove some of the first non-trivial cases of Conjecture \ref{patterns}.
This is already enough to yield some new examples of Ramsey sets.
In \S\ref{gons}, we show the existence of spherical sets that are not
subtransitive.  More precisely, we show that, for any 
$k\geqslant 16$, almost every
cyclic $k$-gon is not subtransitive.  We remark that our proof is 
non-constructive:  we have no explicit example of such a $k$-gon.
Finally, in \S\ref{conclude}, we briefly discuss the `only if' direction of
Conjecture \ref{main} and give some further problems.

For a general overview of Ramsey Theory, see the book of Graham,
Rothschild and Spencer \cite{book}.  We make use of van der Waerden's
theorem \cite{vdw}, and certain formulations of our main conjecture
have a similar flavour to the Hales-Jewett theorem \cite{hjt}; for
both of these, see~\cite{book}.  For further related results
and problems, we refer the reader to the original sequence of papers
\cite{egmrss}, \cite{egmrss2} and \cite{egmrss3} by \erdos, Graham,
Montgomery, Rothschild, Spencer and Straus, and to the later survey
\cite{conj} of Graham.  For previous work on subtransitive sets, see
Johnson \cite{johnson}.

Our notation is standard.  In particular, for 
natural numbers $m$, $n$ with $m\le n$
we write
$[n]$ for the set $\{1,2,\ldots\,,n\}$ and $[m,n]$ for the set
$\{m,m+1,\ldots\,n\}$.  For any set $A$, we write $A^{(m)}$ 
for the collection of subsets of
$A$ of order $m$.  If $A$ is a set of integers and $j$ is an integer, we
write $A+j$ for the set $\{i+j:i\in A\}$.  We write $S_n$ to denote the
symmetric group of all $n!$ permutations of $[n]$.
\end{section}

\begin{section}{Is every transitive set Ramsey?}\label{if}
We now discuss the `if' direction of Conjecture \ref{main}:  how might
we prove that every transitive set is Ramsey?

Refining the notion of Ramsey,
we say that a set $Y\subset\R^d$ is {\it $k$-Ramsey for $X$} if any
$k$-colouring of $Y$ yields a subset congruent to $X$.  It follows from
a compactness argument that if $X$ is Ramsey then, for each $k$,
there is a finite set $Y$ such that $Y$ is $k$-Ramsey for $X$ (see
\cite{egmrss} or \cite{book}).  

As we remarked above, every set known to be Ramsey is proved so by
first embedding it in a finite transitive set $X$.  In fact, all such proofs
then continue by colouring a large product $X^n$, or, more precisely,
a scaling $\lambda X^n=\{\lambda x:x\in X^n\}$.  
(For $X\subset\R^m$, we view $X^n$ as a subset of $\R^{mn}$.)
This leads us to make
the following stronger conjecture.

\begin{conjecture}\label{products}
Let $X\subset\R^m$ be a finite transitive set.  
Then, for any $k$, there exists an
$n$ such that some scaling of $X^n$ is $k$-Ramsey for $X$.
\end{conjecture}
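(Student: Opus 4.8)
We do not know how to prove Conjecture \ref{products} outright; the plan is instead to remove the geometry and show that it follows from a clean Hales--Jewett-type statement about finite groups, which we then isolate as a conjecture in its own right (Conjecture \ref{groups}).

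First I would set up the geometry of $X^n$. Write $G$ for the symmetry group of $X$; translating $X$ so that its centroid lies at the origin, every element of $G$ is an orthogonal transformation of $\R^m$, and $G$ acts transitively on $X$. The key point is that $X^n$ contains far more congruent copies of $X$ than just the obvious diagonal one. Given a \emph{nonempty} ``active set'' $I\subseteq[n]$, a choice of symmetries $g_i\in G$ for $i\in I$, and fixed values $a_j\in X$ for $j\notin I$, define $\phi\colon X\to X^n$ by $\phi(x)_i=g_i x$ for $i\in I$ and $\phi(x)_j=a_j$ for $j\notin I$. Then $\phi$ is well defined (as $g_i x\in g_i X=X$) and injective, and $\|\phi(x)-\phi(y)\|^2=|I|\cdot\|x-y\|^2$ for all $x,y\in X$, since each active coordinate contributes $\|x-y\|^2$ (as $g_i$ is orthogonal) and each passive coordinate contributes $0$. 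Thus $\phi(X)$ is a copy of $X$ scaled by $\sqrt{|I|}$, so inside the scaling $\frac{1}{\sqrt{s}}X^n$ every such ``twisted combinatorial line'' with $|I|=s$ is an honest congruent copy of $X$. Hence it suffices to prove: there is a value $s$, depending only on $G$ and $k$, and an $n$, such that every $k$-colouring of $X^n$ contains a monochromatic twisted combinatorial line with exactly $s$ active coordinates --- one then takes the scaling $\frac{1}{\sqrt{s}}$.

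Next I would remove the geometry altogether. Identifying $X$ with $G/H$ for a point stabiliser $H$, a $k$-colouring of $X^n$ is simply a $k$-colouring of $(G/H)^n$, and the twisted combinatorial lines above are exactly the configurations a ``Hales--Jewett theorem for the group $G$'' should produce: pick an active set $I$, left-translate the $i$-th coordinate by $g_i$ for $i\in I$, and let a single group element run over $G$. So Conjecture \ref{products} follows from a Hales--Jewett-type statement guaranteeing such a monochromatic twisted line in $(G/H)^n$ (a statement that becomes one purely about $G$ in the case of a free action); this is what we record below as Conjecture \ref{groups}.

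The main obstacle is Conjecture \ref{groups} itself, which we cannot prove in general --- only its first non-trivial cases are established, in \S\ref{cases}. It is harder than ordinary Hales--Jewett for two reasons. The left-twists $g_i$ destroy the nested ``combinatorial subspace'' structure on which the usual colour-focusing and amalgamation arguments rely, so a genuinely new idea seems to be needed; even a single nontrivial group appears difficult. Secondly, one must pin the number of active coordinates down to a single value independent of the colouring (since the scaling of $X^n$ must be committed to in advance), and the Hamming cube $\{0,1\}^n$ with its parity colouring already shows that one active coordinate will not do --- securing a good fixed value of $s$ is where one naturally reaches for van der Waerden's theorem. I would expect essentially all of the difficulty to live in these two points; the passage from the combinatorics back to the geometry is routine once the twisted lines are spotted.
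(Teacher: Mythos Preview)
Your proposal is correct and matches the paper's approach: the statement is a conjecture that the paper does not prove, and your reduction---embedding scaled copies of $X$ in $X^n$ via ``twisted lines'' indexed by an active set $I$ and group elements $g_i$, then isolating the resulting fixed-$|I|$ Hales--Jewett-type statement for the symmetry group as Conjecture~\ref{groups}---is exactly what the paper does (the paper phrases it in $G^n$ and pulls back the colouring of $X^n$ via a basepoint, rather than working in $(G/H)^n$, but this is cosmetic). Your remarks on why $|I|=1$ fails and why a fixed value of $|I|$ is needed also mirror the paper's discussion.
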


The `if' direction of Conjecture \ref{main} would, of course, follow
immediately from Conjecture \ref{products}.  

We digress for a moment to comment on a related notion.  
In their initial paper \cite{egmrss} on Euclidean Ramsey
theory, \erdos, Graham, Montgomery, Rothschild, Spencer and Straus
define a set $X$ to be {\it sphere-Ramsey} if for any positive 
integer $k$ there exists a positive integer $d$ and a positive real number
$r$ such that whenever $\{x\in\R^{d+1}:\|x\|=r\}$,
the $d$-dimensional sphere of
radius $r$, is $k$-coloured it
contains a subset congruent to $X$.  It is obvious that any sphere-Ramsey
set must be Ramsey.  As we observed earlier, it is easy to show that
any transitive set is spherical.  Hence Conjectures \ref{main} and
\ref{products} would together imply that a set is sphere-Ramsey if and only
if it is Ramsey.

Our aim in the remainder of this section will be to reformulate Conjecture
\ref{products} as an equivalent purely combinatorial statement (Conjecture 
\ref{patterns}).
Our first step is to
`remove the geometry' by recasting Conjecture \ref{products}
in terms of the symmetry group of $X$.

Let $G$ be a group, $n$ a positive integer and $I\subset[n]$.  Suppose
$\gv=(g_1,g_2,\ldots\,,g_n)\in G^n$ and $h\in G$.  We write $\gih$ for the
word $(k_1,k_2,\ldots\,,k_n)\in G^n$ where
$$k_i=\left\{\begin{array}{cl}
g_i&\hbox{ if }i\not\in I\\
g_ih&\hbox{ if }i\in I
\end{array}\right..$$
As we shall see, Conjecture~\ref{products} is equivalent to the following
conjecture.

\begin{conjecture}\label{groups}
Let $G$ be a finite group.  Then for any positive integer $k$ there exist
positive integers $n$ and $d$ such that whenever $G^n$ is $k$-coloured
there exists a word $\gv\in G^n$ and a set $I\subset[n]$ with $|I|=d$
such that the set $\{\gih:h\in G\}$ is \mono.
\end{conjecture}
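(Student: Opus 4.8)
The natural strategy is to reduce Conjecture~\ref{groups} to the Hales--Jewett theorem \cite{hjt}, working over the alphabet $G$ itself. The first thing to observe is that every combinatorial line is already a group line: if a Hales--Jewett template has its wildcards in a coordinate set $I$ and fixed values $b_i$ elsewhere, then taking $g_i=\id$ for $i\in I$ and $g_i=b_i$ for $i\notin I$ makes $\{\gih:h\in G\}$ equal to that line, since as $h$ ranges over $G$ the active coordinates run through $G$ exactly as the wildcard does. Consequently, applying Hales--Jewett to a $k$-colouring of $G^n$ with $n=\mathrm{HJ}(|G|,k)$ \emph{already} yields a \mono\ set $\{\gih:h\in G\}$ --- provided we ask only that $|I|\geqslant 1$. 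All the content of the conjecture is therefore concentrated in the demand that $|I|$ equal a value $d$ fixed \emph{before} the colouring. This is not a technicality but exactly what the geometry needs: if $X$ is transitive with base point $x_0$ and $\gih$ is read inside $X^n$, each of the $d$ active coordinates contributes an isometric copy of the same displacement, so the pairwise distances in $\{\gih:h\in G\}$ are all $\sqrt d$ times those in $X$. Thus the line is a copy of $X$ scaled by $\sqrt d$, and a single scaling $\lambda=1/\sqrt d$ of $X^n$ recovers a congruent copy; one scaling can only accommodate one value of $d$.

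To gain control over the active-set size, the plan is to pass from lines to combinatorial \emph{subspaces}. By the Graham--Rothschild (subspace) form of Hales--Jewett, for any $m$ there is an $n$ such that every $k$-colouring of $G^n$ contains a \mono\ combinatorial subspace $S$ of dimension $m$, with wildcard blocks $W_1,\dots,W_m$ of sizes $w_1,\dots,w_m$. A short case analysis shows that the group lines of $G^n$ lying inside $S$ are precisely those whose base word $\gv$ is constant on each block (and equal to the fixed values off the blocks) and whose active set $I$ is a union of whole blocks, say $I=\bigcup_{j\in I'}W_j$; here the group structure is used only to make these twisted lines legitimate members of $S$. Since every point of $S$ has the same colour, each such line is automatically \mono. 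Hence a single \mono\ subspace furnishes \mono\ group lines of every active-set size lying in the set of subset sums $\bigl\{\sum_{j\in I'}w_j:I'\subseteq[m]\bigr\}$, which is a genuine improvement on the single uncontrolled size produced by a bare Hales--Jewett line.

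The main obstacle now stands exposed, and it is precisely where the conjecture resists an elementary proof. We may prescribe the \emph{dimension} $m$ of the \mono\ subspace, but not the block sizes $w_j$, so the set of achievable active-set sizes is dictated by the colouring and need not contain our pre-chosen $d$. The one way this would collapse is if we could force $w_j=1$ for all $j$, since then every $d\leqslant m$ would be attainable; but a subspace with singleton blocks is an axis-aligned one, and colourings of the form $c(\gv)=\sum_i f(g_i)\pmod N$, with $f\colon G\to\{0,1,\dots,N-1\}$ nonconstant, admit no \mono\ axis-aligned subspace of positive dimension. The extra freedom of choosing $\gv$ helps make twisted lines \mono\ but places no constraint at all on their size. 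Controlling the number of active coordinates to a prescribed value is therefore the crux. To break it one would seemingly need either a Hales--Jewett-type theorem that delivers a \mono\ line or subspace of prescribed active-set size --- presumably exploiting the group twist, which ordinary Hales--Jewett cannot see --- or a density-type argument in $G^n$; both look like genuinely new ingredients, and it is here that the real difficulty of the statement lies.
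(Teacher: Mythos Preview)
This statement is presented in the paper as an \emph{open conjecture}, not a theorem; the paper gives no proof of it and indeed treats it (together with the equivalent Conjectures~\ref{products}, \ref{hj}, \ref{patterns} and~\ref{uniform}) as the central unresolved question of the `if' direction. There is therefore no proof in the paper against which to compare your attempt.

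Your proposal is itself not a proof but an analysis of why the obvious Hales--Jewett attack falls short, and on that point you are in agreement with the paper. The paper remarks explicitly that without the restriction $|I|=d$ the statement would follow at once from Hales--Jewett, and exhibits the $2$-colouring by the count of identity coordinates modulo $2d$ to show that one cannot even force $\gv$ to be constant on $I$. Your discussion goes a step further by passing to monochromatic $m$-dimensional subspaces via the extended Hales--Jewett/Graham--Rothschild theorem and observing that the achievable active-set sizes are then only the subset sums of the (uncontrolled) block widths $w_1,\dots,w_m$; this is a clean sharpening of the obstruction, though not one the paper spells out. Your final paragraph correctly isolates the fixed-$d$ requirement as the crux and notes that breaking it would seem to need a genuinely new ingredient. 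That is precisely where the paper leaves matters too: it proves only the first nontrivial cases of the equivalent Conjecture~\ref{patterns} (templates $1\ldots12\ldots23$) in \S\ref{cases}. In short, you have not proved the conjecture, but neither has the paper; your write-up is a sound heuristic account of why the problem is hard, consistent with the paper's own commentary.
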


The reader familiar with the Hales-Jewett theorem~\cite{hjt} will see some
resemblance.   Indeed, without the restriction `$|I|=d$', 
Conjecture~\ref{groups}
would be an easy consequence of the Hales-Jewett theorem.

We remark that in Conjecture~\ref{groups} we cannot insist that $d=1$.
More generally, we cannot even insist that $\gv$ be constant on $I$.  Indeed,
to see this,  $2$-colour $G^n$ by colouring a vector according to
whether the number of occurrences of the identity element $e$ lies between
$1$ and $d$ or between $d+1$ and $2d$ (modulo $2d$).

We next show how Conjecture~\ref{products} may be deduced directly from
Conjecture~\ref{groups}.

\begin{prop}
Conjecture \ref{groups} implies Conjecture \ref{products}
\end{prop}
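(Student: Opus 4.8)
The plan is to show that the combinatorial statement of Conjecture~\ref{groups}, applied to $G$ the symmetry group of $X$, produces exactly the monochromatic congruent copy of $X$ that Conjecture~\ref{products} demands, once we interpret the group word as acting on a base point of $X$. First I would fix a finite transitive set $X\subset\R^m$ and let $G$ be its symmetry group, which acts transitively on $X$. Pick a base point $x_0\in X$, so that the orbit map $g\mapsto gx_0$ is a surjection $G\to X$; the stabiliser $H$ of $x_0$ has index $|X|$ in $G$. The key geometric observation is that for a word $\gv=(g_1,\dots,g_n)\in G^n$ and a set $I\subset[n]$ with $|I|=d$, the family $\{\,\gih:h\in G\,\}$, when each coordinate $g$ is sent to the point $g x_0\in X\subset\R^m$, traces out (inside $X^n\subset\R^{mn}$) a set of $|X|$ points that form a \emph{congruent copy of a scaling of $X$}. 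Indeed, only the $d$ coordinates in $I$ vary, and there the point moves from $g_i x_0$ to $g_i h x_0$; since $g_i$ is an isometry of $X$, as $h$ ranges over $G$ the point $g_i h x_0$ ranges over all of $X$, and the map $hx_0\mapsto g_i h x_0$ is the restriction of the isometry $g_i$. Hence the assignment $hx_0\mapsto (\gih\cdot x_0)_{i\in[n]}$ differs from the diagonal embedding $x\mapsto(x,x,\dots,x)$ (restricted to the $I$-coordinates) only by applying a fixed isometry in each coordinate, so its image is congruent to $\sqrt{d}\cdot X$ translated by the fixed vector coming from the coordinates outside $I$.

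The steps, in order, are then as follows. (1) Given $k$, apply Conjecture~\ref{groups} to $G$ to obtain $n$ and $d$. (2) Suppose we are given a $k$-colouring of the scaling $\lambda X^n$, where I will choose $\lambda=1/\sqrt{d}$ (so that the copies above become genuine copies of $X$, not $\sqrt d\cdot X$). Pull this back to a $k$-colouring of $G^n$ via the map $\gv\mapsto \lambda\,(g_i x_0)_{i\in[n]}$; this is a well-defined colouring of $G^n$ because the orbit map is a function. (3) Conjecture~\ref{groups} yields $\gv$ and $I$ with $|I|=d$ such that $\{\gih:h\in G\}$ is monochromatic in the pulled-back colouring. (4) By the geometric observation, the image $\{\lambda\,(\gih\cdot x_0)_i : h\in G\}$ is a congruent copy of $X$ sitting inside $\lambda X^n$, and it is monochromatic in the original colouring. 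This is precisely what it means for $\lambda X^n$ to be $k$-Ramsey for $X$, establishing Conjecture~\ref{products}.

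I expect the main point requiring care --- rather than a genuine obstacle --- to be verifying that the set obtained really is an \emph{isometric} copy of $X$ and not merely a bijective image. The subtlety is that different group elements $h$ may give the same point $h x_0$ (since $H$ may be nontrivial), so one must check that distinct points of $X$ correspond to distinct words $\gih\cdot x_0$ in the relevant coordinates and that distances are preserved. This is handled by the computation of squared distances: for $h,h'\in G$, writing $p(h)=\lambda(\gih\cdot x_0)_{i\in[n]}$, one has $\|p(h)-p(h')\|^2=\lambda^2\sum_{i\in I}\|g_i h x_0-g_i h' x_0\|^2=\lambda^2 d\,\|h x_0-h' x_0\|^2=\|hx_0-h'x_0\|^2$, using that each $g_i$ is an isometry of $\R^m$ fixing the relevant configuration. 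Thus the map $hx_0\mapsto p(h)$ is a well-defined isometry onto its image, which is therefore congruent to $X$. A secondary bookkeeping point is to confirm that the pulled-back colouring of $G^n$ is honestly a colouring (it is, since $\gv\mapsto\lambda(g_ix_0)_i$ is a genuine function $G^n\to\lambda X^n$), and that $\lambda X^n$ does contain the constructed copy --- immediate, since each coordinate $\gih\cdot x_0$ lies in $X$. With these verifications in place the deduction is complete.
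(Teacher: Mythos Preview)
Your proposal is correct and follows essentially the same approach as the paper: apply Conjecture~\ref{groups} to the symmetry group $G$ of $X$, pull back a colouring of (a scaling of) $X^n$ to $G^n$ via the orbit map $g\mapsto g x_0$, and observe that the image of a monochromatic set $\{\gih:h\in G\}$ is congruent to $\sqrt{d}\,X$ because each $g_i$ is an isometry. If anything, you are slightly more careful than the paper in checking that the assignment $hx_0\mapsto p(h)$ is well defined (your distance computation forces $p(h)=p(h')$ whenever $hx_0=h'x_0$), a point the paper leaves implicit.
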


\begin{proof}
Assume that Conjecture \ref{groups} is true.  Let $X$ be a finite
transitive set and let $k$ be a positive integer.  Let $G$ be the symmetry
group of $X$.  By Conjecture \ref{groups}, we may choose $n$ in such a way
that 
whenever $G^n$ is $k$-coloured
there exists a word $\gv\in G^n$ and a set $I\subset[n]$ with $|I|=d$
such that the set $\{\gih:h\in G\}$ is \mono.

Suppose $X^n$ is $k$-coloured.
We induce a $k$-colouring of $G^n$ by picking $x\in X$ and giving $(g_1,g_2,
\ldots\,,g_n)$ the colour of $(g_1(x),g_2(x),\ldots\,,g_n(x))$.  Now choose
$I\subset[n]$ with $|I|=d$ and $\gv\in G^n$ such that the set
$\{\gih:h\in G\}$ is \mono.  For notational convenience, assume \Wlog\ that
$I=[d]$, so that the set
$$Y=\{g_1h(x),\ldots\,,g_dh(x),g_{d+1}(x),\ldots\,,g_n(x):h\in G\}$$
is \mono.  

As $G$ acts transitively on $X$, we have that
$X=\{h(x):h\in G\}$, and so
$$Z=\{(\underbrace{h(x),\ldots\,,h(x)}_d,\underbrace{x,\ldots\,,x}_{n-d}):
h\in G\}$$ is a scaling of $X$ by factor $\sqrt{d}$.  But $g_1$, $\ldots\,$,
$g_m$ are isometries and so $Y$ and $Z$ are isometric.  Hence 
${1\over\sqrt{m}}X^n$
is $k$-Ramsey for $X$.
\end{proof}

Conjectures~\ref{products} and~\ref{groups} are in fact equivalent, but
we have no simple and direct way of deducing Conjecture~\ref{groups} from
Conjecture~\ref{products}.
In the case of a group $G$ which acts as the symmetry group of
some transitive set of order $|G|$, Conjecture
\ref{groups} may be deduced easily from Conjecture \ref{products}.
But, for example, the cyclic group $C_3$ does not act as the symmetry
group of any
$3$-point set.  The proof of this direction of the equivalence must wait
until the end of the current section.

Having successfully removed the geometry, our next task is to
`remove the groups'. One
may think of Conjecture~\ref{groups} as saying that the varying parts of
our $m$ words ($m=|G|$) contain the group table of $G$, possibly with
some columns omitted and some columns repeated.  That is, we obtain the
$m$ rows of some Latin-square-type pattern.  Instead of asking merely
for these $m$ rows, we could instead demand all $m!$ permutations of the
elements of $G$.  This now gives us a purely combinatorial 
statement---a `fixed-block-size'
Hales-Jewett-type conjecture.

We first need a preliminary definition.  
We wish to consider a collection of words of the following form:
we fix $m$ blocks, make our words the same as each other outside
these blocks, and take each of the $m!$ possible arrangements of
$1$, $2$, $\ldots\,$, $m$ amongst the blocks.  Formally,
a {\it block permutation set}
in $[m]^n$ is a set $B$ formed in the following way.  First, select
pairwise
disjoint subsets $I_1$, $\ldots\,$, $I_m\subset[n]$ 
and elements $a_i\in[m]$ for each 
$i\not\in\bigcup_{j=1}^mI_j$. 
For each $\pi\in S_m$, define $a^\pi\in[m]^n$ by
$$(a^\pi)_i=\left\{\begin{array}{cl}
\pi(j)&\hbox{if }i\in I_j\\
a_i&\hbox{if }i\not\in\bigcup_{j=1}^mI_j
\end{array}\right..$$
Now set $B=\{a^\pi:\pi\in S_m\}\subset[m]^n$.

If $\sum_{j=1}^m|I_j|=d$ then we say that $B$ is of {\it degree} $d$.
We sometimes refer to the sets $I_1$, $I_2$, $\ldots\,$, $I_m$ as
{\it blocks.}

We remark that a block permutation set need not contain precisely $m!$ 
elements:  some of the blocks could be empty.

\begin{conjecture}\label{hj}
Let $m$ and $k$ be positive integers.  Then there exist positive integers
$n$ and $d$ such that whenever $[m]^n$ is $k$-coloured it contains a
block permutation set of degree $d$.
\end{conjecture}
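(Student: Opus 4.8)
The plan is to read Conjecture~\ref{hj} as a Hales-Jewett-type statement in which the \emph{size} of the monochromatic configuration is prescribed, to dispose of the small cases by hand, and to attack the general case by a colour-focusing argument engineered so as to keep this size under control. The first point to absorb is that one cannot get by with combinatorial lines alone, these being exactly the block permutation sets with a single nonempty block: the $2$-colouring from the remark following Conjecture~\ref{groups} --- colour a word of $[m]^n$ by the number of occurrences of the symbol $1$, reduced modulo $2d$ and then split into two halves --- is \mono\ on no combinatorial line of degree $d$ whatsoever, since among the $m$ points of such a line one has $d+c$ occurrences of the symbol $1$ and the other $m-1$ have $c$ (for some constant $c$), and $d+c\not\equiv c\pmod{2d}$. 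That same colouring tells us how to choose $d$: take it to be a multiple of $m$ and aim for a block permutation set whose $m$ blocks \emph{all have size $d/m$}, so that every symbol occurs equally often in all $m!$ points and the obstruction evaporates. Thus the target is: for suitable $s$ and $n$, every $k$-colouring of $[m]^n$ contains a \mono\ block permutation set with $m$ blocks each of size $s$. The Hales-Jewett theorem~\cite{hjt} supplies monochromatic combinatorial lines but with no control at all over the number of active coordinates, and the heart of the matter is to produce instead a configuration of this prescribed shape.

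I would first settle the small cases. When $m=2$ the structure collapses: a block permutation set with two blocks of size $s$ is nothing but an unordered pair of points of $\{1,2\}^n$ differing in exactly $2s$ coordinates, each point taking each value on exactly $s$ of them. Here the conjecture asks only that every $k$-colouring of a high-dimensional cube contain two points of one colour at a suitable fixed distance, which falls out of a clique in the appropriate distance graph: the $k+1$ indicator vectors of $k+1$ pairwise disjoint blocks of $t$ coordinates lie pairwise at distance $2t$ and are automatically balanced on the coordinates in which they differ, so by pigeonhole two of them share a colour. For small $m$ and small $k$ a two-point configuration is no longer available, and I would look instead for the corresponding hypergraph statement: an explicit finite family of block permutation sets, all with $m$ blocks of a common size, such that no $k$-colouring of $[m]^n$ avoids a \mono\ member of the family --- with van der Waerden's theorem~\cite{vdw} a natural tool for controlling the block sizes. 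These cases, fed back through the implications of this section, already yield new examples of Ramsey sets.

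For the full conjecture I would attempt an induction on the number of colours, driven by a Hales-Jewett- or Graham-Rothschild-style focusing argument but carried out inside a \emph{pre-blocked} cube: regard $[m]^{LN}$ as $N$ groups of $L$ coordinates and only ever activate whole groups, so that everything produced has degree a multiple of $L$. One would then use van der Waerden's theorem on the sequence of numbers of groups activated so far along the focusing steps to normalise the degree, together with a further averaging step to distribute the active coordinates evenly among the $m$ blocks that a block permutation set demands. I expect the main obstacle to be precisely this coupling. An ordinary focusing argument is entirely free in which and how many coordinates it activates at each stage, and it is far from clear how to keep the colours focused while simultaneously forcing the activations to be balanced across $m$ blocks and of a prescribed total size; circumventing this appears to call for a genuinely new idea, which is presumably why only the first cases of Conjecture~\ref{hj} are at present within reach.
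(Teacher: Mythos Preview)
This statement is a \emph{conjecture}, and the paper does not prove it; it establishes only the equivalences among Conjectures~\ref{products}--\ref{uniform} and then, in \S\ref{cases}, proves the first nontrivial cases of the equivalent Conjecture~\ref{patterns}. Your proposal is likewise not a proof but a research plan, and you say so yourself in the final sentence. So the honest comparison is between partial progress and partial progress.

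On that footing: your $m=2$ argument is correct and in fact slightly slicker than the paper's. The paper handles $m=2$ (for arbitrary templates $1\ldots12\ldots2$) by applying Ramsey's theorem to $[n]^{(s)}$; for the block-permutation template $12$ your pigeonhole on $k+1$ disjoint blocks of size $t$ is enough, since any two of the resulting indicator vectors form a block permutation set with two blocks of size $t$.

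Where the paper goes further than your proposal is in the concrete case analysis. Your sketch for ``small $m$ and small $k$'' is only a gesture towards van der Waerden; the paper actually carries this out, not for $m=3$, $k$ small, but for all $k$ and all templates $\underbrace{1\ldots1}_r\underbrace{2\ldots2}_s3$ (Theorem~\ref{00112}). The mechanism is a three-layer product colouring: one application of Ramsey's theorem to make the positions of the $1$'s irrelevant, a second application of Ramsey's theorem (with $k^a$ colours) to find a large set $F$ on which translates of $t$-sets behave uniformly, and finally van der Waerden on the translation parameter to pin down the common block size. This is rather more intricate than a single focusing pass, and the paper stops short of the template $112233$, which it poses as the next open case.

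Your proposed general attack --- pre-blocking $[m]^{LN}$ and running a focusing argument constrained to activate whole groups, with van der Waerden to normalise degrees --- is a reasonable heuristic, and you correctly identify the obstruction: keeping the active coordinates balanced across $m$ blocks while focusing colours is exactly what nobody knows how to do. The paper offers no competing general strategy; its contribution is the specific layered argument for the $1\ldots12\ldots23$ family, which you would need to reproduce (or supersede) to match its content.
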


Again, note that if the degree condition were omitted then this would
follow easily from the Hales-Jewett theorem.  Indeed, any
block permutation set in $[m]^n$ is contained in some $m$-dimensional
combinatorial subspace ($m$-parameter set).
Furthermore, as with Conjecture~\ref{groups}, we cannot require each
block to have size $1$.

It is clear that Conjecture \ref{groups} follows from Conjecture
\ref{hj}.  While Conjecture \ref{hj} appears much stronger, it is in
fact equivalent to Conjecture \ref{groups}, as we now show.  (In fact,
this follows from later results; we include it here because the proof
is concise and direct.)

\begin{prop}
Conjectures \ref{groups} and \ref{hj} are equivalent.
\end{prop}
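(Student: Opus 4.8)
The plan is to prove the two implications separately. One of them, ``Conjecture~\ref{hj} implies Conjecture~\ref{groups}'', was already observed above to be clear, so the substance is the reverse implication, and the main idea there is to apply Conjecture~\ref{groups} to the symmetric group $S_m$, but with the \emph{right} way of encoding a permutation as a string over $[m]$.

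For the (easy) direction that Conjecture~\ref{hj} implies Conjecture~\ref{groups}, let $G$ be a finite group, put $m=|G|$, and fix a bijection $\psi\colon G\to[m]$. Given $k$, apply Conjecture~\ref{hj} to $m$ and $k$ to obtain $n$ and $d$. Given a $k$-colouring of $G^n$, transport it coordinatewise to $[m]^n$ via $\psi$, and use Conjecture~\ref{hj} to find a monochromatic block permutation set with blocks $I_1,\dots,I_m$, outside values $a_i$, and degree $d$. Set $I=I_1\cup\dots\cup I_m$, so $|I|=d$, and define $\vec g\in G^n$ by $g_i=\psi^{-1}(j)$ for $i\in I_j$ and $g_i=\psi^{-1}(a_i)$ for $i\notin I$. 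The key observation is that for each $h\in G$ the map $j\mapsto\psi(\psi^{-1}(j)h)$ is a permutation $\pi_h$ of $[m]$, and a coordinate-by-coordinate check then shows that the $\psi$-decoding of $\vec g\times_I h$ is exactly $a^{\pi_h}$. Hence $\{\vec g\times_I h:h\in G\}$ maps into our (monochromatic) block permutation set, so it is monochromatic.

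For the direction that Conjecture~\ref{groups} implies Conjecture~\ref{hj}, let $m,k$ be given and apply Conjecture~\ref{groups} to $G=S_m$ and $k$ to obtain $N$ and $D$. Put $n=mN$ and index the coordinates of $[m]^n$ by pairs $(i,t)$ with $i\in[N]$ and $t\in[m]$. Given a $k$-colouring $c$ of $[m]^n$, induce a $k$-colouring $c'$ of $S_m^N$ by decoding the $i$-th coordinate $\sigma_i$ as the string $(\sigma_i^{-1}(1),\dots,\sigma_i^{-1}(m))$; that is, $c'(\sigma_1,\dots,\sigma_N)=c(v)$ where $v_{(i,t)}=\sigma_i^{-1}(t)$. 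Conjecture~\ref{groups} supplies $\vec\sigma=(\sigma_1,\dots,\sigma_N)\in S_m^N$ and $I\subseteq[N]$ with $|I|=D$ such that $\{\vec\sigma\times_I h:h\in S_m\}$ is $c'$-monochromatic. Now set $J_s=\{(i,\sigma_i(s)):i\in I\}$ for $s\in[m]$; these are pairwise disjoint subsets of $[n]$, each of size $D$, with union $\{(i,t):i\in I\}$. Taking outside values $a_{(i,t)}=\sigma_i^{-1}(t)$ for $i\notin I$, I claim the block permutation set $B$ with blocks $J_1,\dots,J_m$ is precisely the decoding of $\{\vec\sigma\times_I h:h\in S_m\}$, and hence $c$-monochromatic of degree $mD$. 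Indeed, for $i\in I$ the $i$-th coordinate of $\vec\sigma\times_I h$ is $\sigma_i h$, which decodes to the string $t\mapsto(\sigma_i h)^{-1}(t)=h^{-1}(\sigma_i^{-1}(t))$; so on $J_s$ (where $\sigma_i^{-1}(t)=s$) this coordinate equals $h^{-1}(s)=a^{h^{-1}}_{(i,t)}$, while off the blocks it is $\sigma_i^{-1}(t)=a_{(i,t)}$. Thus the decoding of $\vec\sigma\times_I h$ is $a^{h^{-1}}$, and as $h$ ranges over $S_m$ we obtain all of $B$.

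The main obstacle, and the reason the naive attempt fails, is exactly this choice of decoding. If one decodes $\sigma$ in the obvious way as $(\sigma(1),\dots,\sigma(m))$, then the $i$-th coordinate of $\vec\sigma\times_I h$ decodes to $t\mapsto\sigma_i(h(t))$, so the multiplier $h$ acts on the alphabet in a way twisted by $\sigma_i$, differently on different blocks; since the words produced by Conjecture~\ref{groups} need not be constant on $I$ (and by the remark following Conjecture~\ref{groups} one cannot insist that they are), the resulting set is not a block permutation set and in fact need not contain one. Decoding via the inverse permutation repairs this: because in Conjecture~\ref{groups} the letter $h$ is multiplied on the \emph{right}, passing to inverses makes $h^{-1}$ act on the left of the alphabet, i.e.\ uniformly across all blocks, while the unknown $\sigma_i^{-1}$ only permutes which coordinate within block $i$ receives which value --- and that is absorbed harmlessly into the definition of the $J_s$. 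Once this point is spotted, everything else is routine bookkeeping.
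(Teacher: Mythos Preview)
Your proof is correct, and in the non-trivial direction it rests on exactly the same idea as the paper's: apply Conjecture~\ref{groups} to $S_m$ and decode permutations via their inverses so that the varying letter $h$ acts on the alphabet rather than on positions. The one difference is in the decoding map. The paper sends $\pi\in S_m$ to the single letter $\pi^{-1}(1)\in[m]$, so it induces a colouring of $S_m^n$ directly from the given colouring of $[m]^n$; the blocks are then $I_j=\{i\in I:\pi_i^{-1}(1)=j\}$, which partition $I$ but need be neither nonempty nor equal in size. You instead send $\sigma$ to the whole row $(\sigma^{-1}(1),\ldots,\sigma^{-1}(m))$, passing from $[m]^{mN}$ to $S_m^N$; this costs a factor of $m$ in both $n$ and $d$, but buys you blocks $J_1,\ldots,J_m$ that are automatically all of the same size $D$ --- so your argument in fact yields the uniform version (Conjecture~\ref{uniform}) of Conjecture~\ref{hj} for the template $12\ldots m$ directly from Conjecture~\ref{groups}, without going through the geometric Conjecture~\ref{products}.
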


\begin{proof}
For the non-trivial direction of the implication,
assume Conjecture \ref{groups} is true, and let $m$ and $k$ be positive
integers.  We apply Conjecture \ref{groups} to the symmetric group $S_m$
and obtain integers $n$ and $d$ as above.  Now suppose that $[m]^n$ is
$k$-coloured.  We induce a colouring of $S_m^n$ by giving
$(\pi_1,\ldots\,,\pi_n)\in S_m^n$ the colour of 
$(\pi_1^{-1}(1),\ldots\,,\pi_n^{-1}(1))\in[m]^n$.  Now choose
$\piv=(\pi_1,\ldots\,,\pi_n)\in S_m^n$ and $I\subset[n]$ with $|I|=d$
such that the set $\{\pis:\sigma\in S_m\}$ is \mono.  For simplicity
of notation, assume \Wlog\ that $I=[d]$.  Thus the set
$$\{(\sigma^{-1}\pi_1^{-1}(1),\ldots\,,\sigma^{-1}\pi_d^{-1}(1),
\pi_{d+1}^{-1}(1),\ldots\,,\pi_n^{-1}(1)):\sigma\in S_m\}\subset[m]^n$$
is \mono.  We now take $I_j=\{i\in I:\pi_i^{-1}(1)=j\}$ for each $j=1$, $2$,
$\ldots\,$, $m$.
\end{proof}

We now give a further equivalent formulation of this 
conjecture which we hope
might be more amenable to proof.

Conjecture \ref{hj} asks for a collection of words of a certain type all of 
the same
colour.    We may think of this collection as
being represented by the `pattern' $12\ldots m$.  More generally, we could
consider an arbitrary pattern.  For example, to realise the pattern
$11223$ we would demand five blocks, and a word for each way of assigning
the symbol $1$ to two of the blocks, the symbol $2$ to another 
two of the blocks and the symbol $3$ to
the remaining block.  So in this case we would have a total of 30 words (if
all of the blocks were non-empty).

Formally, we define a {\it template} over $[m]$ to be a non-decreasing
word $\tau\in[m]^\ell$ for some $\ell$. 

Next, we define a {\it block set with template $\tau$.}  The reader should bear
in mind our earlier definition of a block permutation set, which is a certain
special case:  it is a block set with template $12\ldots m$.

Suppose that $\tau\in[m]^\ell$ is a template.
For each $j\in[m]$, let $c_j$ be the number of times that the symbol
$j$ appears in the template $\tau$; that is,
$c_j=|\{i\in[\ell]:\tau_i=j\}|$.  Note that
$\sum_{j=1}^mc_j=\ell$.
We define the set $S$ of {\it rearrangements} of $\tau$ by
$$S=\Big\{\,\pi\in[m]^\ell:\big|\{i\in[\ell]:
\pi_i=j\}\big|=c_j\quad\forall j\in[m]
\,\Big\}\,.$$
A {\it block set with template $\tau$}
in $[m]^n$ is a set $B$ formed in the following way.  First, select
pairwise
disjoint subsets $I_1$, $\ldots\,$, $I_\ell\subset[n]$ 
and elements $a_i\in[m]$ for each 
$i\not\in\bigcup_{j=1}^\ell I_j$. 
For each $\pi\in S$, define $a^\pi\in[m]^n$ by
$$(a^\pi)_i=\left\{\begin{array}{cl}
\pi_j&\hbox{if }i\in I_j\\
a_i&\hbox{if }i\not\in\bigcup_{j=1}^\ell I_j
\end{array}\right..$$
Now set $B=\{a^\pi:\pi\in S\}\subset[m]^n$.

As before, if $\sum_{j=1}^\ell|I_j|=d$ then we say that 
$B$ is of {\it degree} $d$, and we may
refer to the sets $I_1$, $I_2$, $\ldots\,$, $I_\ell$ as
{\it blocks.}  

We are now ready to state the conjecture.

\begin{conjecture}\label{patterns}
Let $m$ and $k$ be positive integers and let $\tau$ be a template over $[m]$.
Then there exist positive integers $n$ and $d$ such that whenever $[m]^n$
is $k$-coloured it contains a \mono\ block set of degree $d$ with
template $\tau$.
\end{conjecture}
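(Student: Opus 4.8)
The first step is to decouple the problem from the particular template: since Conjecture~\ref{patterns} is equivalent to Conjecture~\ref{hj}, and hence to Conjectures~\ref{groups} and~\ref{products}, it is enough to treat the cleanest case --- a monochromatic block permutation set, template $12\ldots m$ --- although for the inductive step it is convenient to keep general templates available. The plan is then to run an induction on $m$ in the spirit of Shelah's proof of the Hales--Jewett theorem: colour $[m]^n$ with $n$ enormous, peel coordinates off in groups, and assemble the $\ell$ blocks one at a time. Two soft reductions set the stage. First, iterating the Hales--Jewett theorem lets one pass to a monochromatic $N$-dimensional combinatorial subspace of $[m]^n$ for any prescribed $N$, inside which all $m^N$ parameter points share a colour and there is ample room to manoeuvre. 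Secondly --- and this is the statement one should actually carry through the induction --- one should aim to show that the degree $d$ may be taken to be \emph{any sufficiently large multiple of a fixed integer $D=D(m,k,\tau)$}; this divisibility slack is what survives the amalgamation step, in which several lower-$m$ monochromatic structures, matched in colour by pigeonhole, are glued together and their degrees add.

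The crux, and the feature that genuinely distinguishes this from the ordinary Hales--Jewett theorem, is that the degree must come out exactly right. A Shelah-type construction produces wildcard sets whose sizes are governed by iterated Ramsey estimates and are wildly unequal, so one has no control over $\sum_j|I_j|$, and the Hales--Jewett theorem gives no handle on the wildcard size by itself. What the remark following Conjecture~\ref{groups} --- the $2$-colouring of $G^n$ by the number of occurrences of $\id$ taken modulo $2d$ --- does is identify the right structural target: every ``count symbol $j$ modulo $M$'' colouring is constant on a block set precisely when the blocks $I_1,\ldots,I_\ell$ are \emph{balanced}, all of the same size, since then the number of active coordinates carrying any given symbol is the same for every rearrangement. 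So a proof must drive the Hales--Jewett-style construction while keeping the blocks balanced, equivalently keeping the degree inside a highly divisible residue class. I expect this to force an interleaving of the Ramsey step with a separate equalisation step: having found a monochromatic block set of some uncontrolled degree, one re-enters a long monochromatic combinatorial subspace and pads each block up to a common prescribed length, checking that monochromaticity is preserved throughout. Making the Ramsey induction and the equalisation cooperate, uniformly in $m$, is where I expect the real difficulty to lie; van der Waerden's theorem, mentioned in the introduction as an ingredient, very plausibly enters exactly here, to lay down equally spaced blocks.

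For the first non-trivial cases one can presumably short-circuit this machinery. When $m$ is small, or the number of colours is small, or the template is short, the block set required is simple enough --- for $m=2$ and template $12$ it is merely a pair of points differing by a swap of two equal-sized blocks --- that an explicit construction via van der Waerden's theorem, or an ad hoc pigeonhole argument, can pin down the degree directly. This should be the substance of \S\ref{cases}; the challenge of the full conjecture is to replace these hand arguments with a single uniform mechanism.
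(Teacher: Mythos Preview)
The statement you are attempting is Conjecture~\ref{patterns}, which is \emph{open}: the paper does not prove it. What the paper does prove is the case $m=2$ (an easy application of Ramsey's theorem) and, in Theorem~\ref{00112}, the case $m=3$ for templates of the form $1\ldots12\ldots23$. The first genuinely open case is stated as Problem~\ref{001122} (template $112233$). So there is no ``paper's own proof'' to compare against for the general statement.

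Your proposal is not a proof but an outline with a gap you yourself identify. The crucial step---``having found a monochromatic block set of some uncontrolled degree, one re-enters a long monochromatic combinatorial subspace and pads each block up to a common prescribed length, checking that monochromaticity is preserved throughout''---is precisely the obstruction, and you give no mechanism for it. Padding a block changes the word in the padded coordinates, and there is no reason the resulting words remain the same colour; the large monochromatic combinatorial subspace produced by Hales--Jewett has constant colour only on its \emph{parameter points}, not on arbitrary words obtained by freezing some wildcards at mixed values. You write that making the Ramsey induction and the equalisation cooperate ``is where I expect the real difficulty to lie'': that is correct, and nothing in your sketch addresses it. The $2$-colouring in the remark after Conjecture~\ref{groups} shows one cannot demand that $\gv$ be constant on $I$, so a naive equalisation cannot work.

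For comparison, the paper's proof of the special case $1\ldots12\ldots23$ does not proceed by Shelah-style induction on $m$ at all. It exploits the fact that the template contains a single $3$: two nested applications of Ramsey's theorem reduce to a situation where the colour depends only on the relative positions of the $2$'s and $3$'s, and then van der Waerden's theorem is used to locate an arithmetic progression of ``shifts'' of a fixed pattern of $3$'s, which is what forces the blocks to have equal size $t$. The divisibility you anticipate does appear---one takes $t=a!$ so that the common difference of the van der Waerden progression divides $t$---but the argument is tailored to the single-$3$ restriction and does not obviously extend even to template $112233$.
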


It is easy to see that Conjectures \ref{hj} and \ref{patterns} are
equivalent.  Conjecture \ref{hj} is simply the special case of
Conjecture \ref{patterns} for the template $12\ldots m$.  In the other
direction, Conjecture \ref{patterns} for the template $\tau_1\ldots
\tau_\ell$ on alphabet $[m]$ follows immediately from Conjecture
\ref{hj} on alphabet $[\ell]$. Indeed, given a colouring of $[m]^n$ we
use the map $(x_1,\ldots\,,x_n)\mapsto (\tau_{x_1},\ldots,\tau_{x_n})$
to induce a colouring of $[\ell]^n$ and then apply
Conjecture~\ref{hj}.

What can we say about the initial cases of Conjecture~\ref{patterns}?  The
case $m=1$ is trivial.  In the next case, $m=2$, the conjecture is
true for all templates by an easy application of Ramsey's theorem,
as we shall explain in \S\ref{cases}.  We shall then prove the first 
non-trivial cases:  templates of the form $1\ldots12\ldots23$.

It turns out that, in all of these cases, the 
block set we produce is 
{\it uniform:} that is, all of its blocks have the same size.  
This suggests the following 
conjecture, which perhaps appears more natural.

\begin{conjecture}\label{uniform}
Let $m$ and $k$ be positive integers and let $\tau$ be a template over $[m]$.
Then there exist positive integers $n$ and $d$ such that whenever $[m]^n$
is $k$-coloured it contains a \mono\ {\rm uniform}
block set of degree $d$ with template $\tau$.
\end{conjecture}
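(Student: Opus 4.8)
We do not expect to settle Conjecture~\ref{uniform} in full; rather, the plan is to establish it in exactly the range of cases in which we can establish Conjecture~\ref{patterns}, taking care that the block set produced always has all of its blocks of the same size. We proceed by induction on the complexity of the template, measured first by the alphabet size $m$ and then by the number of distinct symbols occurring in $\tau$. The case $m=1$ is vacuous. For $m=2$ and an arbitrary template $1\ldots12\ldots2$, with $a$ ones and $b$ twos say, we identify $[2]^n$ with the family of subsets of $[n]$; a uniform block set with this template of degree $(a+b)t$ then amounts to a choice of $a+b$ pairwise disjoint $t$-sets, together with a disjoint fixed set, such that the fixed set united with the union of any $a$ of the $t$-sets receives one common colour. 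Such a configuration is produced by a direct application of Ramsey's theorem to a suitable auxiliary colouring of subsets of a large ground set, and this already yields the uniform version when $m=2$. The substantive cases are the templates of the form $1\ldots12\ldots23$, and these are the business of \S\ref{cases}.

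For a template $1\ldots12\ldots23$ the idea is to peel the symbols off one at a time: fixing $k$ and taking $n$ enormous, we first locate the single ``$3$''-block, then the ``$2$''-blocks, then the ``$1$''-blocks, at each stage working inside a product structure on the coordinates that remain and feeding the problem back to itself with a shorter template at the cost of more colours. What needs care beyond a routine Hales-Jewett-type product argument is the requirement that all the blocks finally produced have the same size, and here van der Waerden's theorem~\cite{vdw} does the work: it lets us refine a long, unstructured family of candidate switch-positions to a monochromatic arithmetic progression, and an equally spaced set of switch-positions is exactly what lets one carve out blocks of a common size. Checking that the colouring induced at each stage is again a genuine finite colouring, so that the induction closes, is the fiddly (but routine) part of the argument.

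The step we expect to be the real obstacle is going beyond these templates --- admitting a third distinct symbol, or allowing $m\geqslant 3$ together with a long template. This is the same difficulty that places Conjectures~\ref{groups}--\ref{uniform} beyond the reach of the Hales-Jewett theorem: the $\bmod\ 2d$ colourings noted above show that the prescribed degree cannot be pinned down by any soft argument, and insisting on \emph{equal} block sizes rather than merely a fixed total only makes matters worse, since one is then not free to discard coordinates from a block --- shrinking a single block alters which blocks each rearrangement touches, and so changes the configuration one has found. We suspect that a genuinely new ingredient is needed, perhaps a density or regularity statement for these block structures in the spirit of the density Hales-Jewett theorem, and that such an argument, handled with care, would deliver the uniform version alongside Conjecture~\ref{patterns}; but we have not been able to carry this through.
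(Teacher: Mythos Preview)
Conjecture~\ref{uniform} is open, and the paper does not claim to prove it; neither do you, as you say in your first sentence. So there is no proof to compare against in the ordinary sense. What the paper \emph{does} establish about Conjecture~\ref{uniform} is its equivalence with Conjectures~\ref{products}--\ref{patterns}: the only nontrivial implication is the deduction of Conjecture~\ref{uniform} from the geometric Conjecture~\ref{products}, and this is carried out via Lemma~\ref{2.3}. One embeds the template into $\R^m$ as the orbit $X$ of $(\alpha_1,\ldots,\alpha_m)$ under $S_m$ with the $\alpha_i$ chosen algebraically independent, and then shows by comparing polynomial identities in the $\alpha_i$ that \emph{every} copy of $sX$ inside the product $Y=\{\alpha_1,\ldots,\alpha_m\}^{nm}$ is forced to be the image of an $s^2$-uniform block set. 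Thus Conjecture~\ref{products} yields uniformity for free. Your proposal does not engage with this equivalence argument at all, and the algebraic-independence idea is precisely the missing ingredient.

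What you do sketch --- verifying Conjecture~\ref{uniform} directly for $m\leqslant 2$ and for templates $1\ldots12\ldots23$ by noting that the constructions of \S\ref{cases} already produce equal-sized blocks --- is correct as far as it goes, and the paper makes the same observation: the $m=2$ argument gives singleton blocks, and Theorem~\ref{00112} produces blocks all of size $t$. Your account of the latter is somewhat impressionistic (the proof does not really ``peel off symbols one at a time'' via an induction on shorter templates; it is a single layered application of Ramsey's theorem twice and van der Waerden once), but you have the right ingredients. Still, this is an observation about the known special cases, not a proof or proof-strategy for the conjecture, and it is not the result the paper proves about Conjecture~\ref{uniform}.
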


At first sight, Conjecture~\ref{uniform} may appear rather stronger than
Conjecture~\ref{patterns}---we have imposed an additional constraint on
the monochromatic block set that we seek.  However, they do in fact
turn out to be equivalent.  We do not see how to deduce 
Conjecture~\ref{uniform} directly from Conjecture~\ref{patterns}, but instead
go via the geometric Conjecture~\ref{products}.


To prove that all of the conjectures of this section 
(Conjectures \ref{products}--\ref{uniform}) are equivalent, it now suffices to show
that we may deduce Conjecture~\ref{uniform} from Conjecture~\ref{products}.
So, how might we
use Conjecture~\ref{products} to deduce
Conjecture~\ref{uniform} for the template $1\ldots m$? One approach
would be to `embed' the template into $\R^m$ as follows.

Suppose $\alpha_1,\alpha_2,\ldots,\alpha_{m}$ are real numbers. Let $X$
be the set in $\R^{m}$ of all permutations of the vector
$(\alpha_1,\alpha_2,\ldots,\alpha_{m})$ and
let $Y=\{\alpha_1,\alpha_2,\ldots,\alpha_m\}^{nm}$. Note that $
X^n\subset Y$. We
shall think of $Y$ as the image of $[m]^{nm}$ in the obvious way.

Certainly $X$ is transitive ($S_m$ acts on it), so by
Conjecture~\ref{products} we know that (provided $n$ is sufficiently
large) whenever $Y$ is $k$-coloured it contains a monochromatic set
congruent to a fixed scaling $sX$ of $X$. One way such a set could
occur is as the image of an $s^2$-uniform block set, but of
course there may be many other ways as well.

The heart of the proof is to ensure that the \emph{only} subsets of
$Y$ congruent to $sX$ are the images of $s^2$-uniform block sets.

\begin{lemma}\label{2.3}
  Let $\alpha_1,\alpha_2,\ldots,\alpha_{m}$ be algebraically
  independent real numbers and define $X$ and $Y$ as above. Then, for
  $s>0$, every subset of $Y$ congruent to $sX$ is the image of an
  $s^2$-uniform block set.
\end{lemma}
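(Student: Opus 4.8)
The plan is to analyse a subset $W\subseteq Y$ congruent to $sX$ one coordinate at a time: I will show that each coordinate is either constant across $W$ or ``tracks'' one of $m$ blocks, and then that these blocks all have size $s^2$. Throughout I assume $m\ge 3$. Write $N=nm$ and identify $Y$ with $[m]^N$ by replacing a coordinate with value $\alpha_i$ by the symbol $i$; for $\rho\in S_m$ put $v_\rho=(\alpha_{\rho(1)},\dots,\alpha_{\rho(m)})$, so that $X=\{v_\rho:\rho\in S_m\}$ has exactly $m!$ points. Given $W$ congruent to $sX$, since $|W|=m!$ the congruence yields a bijection $\rho\mapsto w^\rho$ from $S_m$ onto $W$ with $\|w^\rho-w^\sigma\|=s\,\|v_\rho-v_\sigma\|$; I write $w^\rho=(h_1(\rho),\dots,h_N(\rho))\in[m]^N$, so that the $t$th coordinate of $w^\rho$ is $\alpha_{h_t(\rho)}$.

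First I would turn the congruence into combinatorics. For $\rho,\sigma\in S_m$ with $\tau=\sigma\rho^{-1}$, writing $N_{ab}=|\{t:h_t(\rho)=a,\ h_t(\sigma)=b\}|$, one has $\|w^\rho-w^\sigma\|^2=\sum_{a<b}(N_{ab}+N_{ba})(\alpha_a-\alpha_b)^2$ and $\|s v_\rho-s v_\sigma\|^2=s^2\sum_{a<b}\big([\tau(a)=b]+[\tau(b)=a]\big)(\alpha_a-\alpha_b)^2$, and these coincide. The elementary key point is that the $\binom m2$ reals $(\alpha_a-\alpha_b)^2$ are linearly independent over $\mathbb Q$ (compare coefficients of the monomials $x_ax_b$ in $\sum_{a<b}\lambda_{ab}(x_a-x_b)^2$). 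A little care is needed because $s^2$ is a priori an arbitrary real and $\mathbb Q$-linear independence is not $\mathbb R$-linear independence; the trick is to write the identity above once with $\sigma=(p\,q)\rho$ and once with $\sigma=(p'\,q')\rho$ for a second transposition $(p'\,q')$ with $\{p',q'\}\ne\{p,q\}$ (this is where $m\ge 3$ enters), and cross-multiply to eliminate $s^2$, producing a genuine polynomial identity over $\mathbb Q$. Comparing coefficients then gives: for every $\rho$ and every transposition $(p\,q)$, the words $w^\rho$ and $w^{(p\,q)\rho}$ agree except in exactly $2s^2$ coordinates, and on each of those their values are $\alpha_p$ and $\alpha_q$ in some order. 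In particular $2s^2\in\mathbb Z$, and every coordinate function $h_t$ satisfies the dichotomy
\[\text{for all }\rho\text{ and transpositions }(p\,q):\quad h_t\big((p\,q)\rho\big)=h_t(\rho)\quad\text{or}\quad\{h_t((p\,q)\rho),\,h_t(\rho)\}=\{p,q\}.\]

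The heart of the proof is the structure of a single such function: \emph{if $h\colon S_m\to[m]$ satisfies this dichotomy then $h$ is constant, or $h(\rho)=\rho(j_0)$ for some fixed $j_0$.} I would argue as follows. Using the dichotomy only for transpositions $(p\,q)$ avoiding a fixed value $v$ shows $h^{-1}(v)$ is invariant under left multiplication by the stabiliser of $v$ in $S_m$; as the left orbits of that stabiliser are precisely the sets $\{\rho:\rho^{-1}(v)=i\}$, $i\in[m]$, we get $h^{-1}(v)=\{\rho:\rho^{-1}(v)\in P_v\}$ for some $P_v\subseteq[m]$. Let $B\in\{0,1\}^{[m]\times[m]}$ have $B_{v,i}=1$ iff $i\in P_v$; then the mere fact that $h$ is a well-defined function on all of $S_m$ says exactly that $\sum_v B_{v,\pi(v)}=1$ for every $\pi\in S_m$. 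Swapping the images of two points under $\pi$ shows that $B_{v,a}-B_{v,b}$ does not depend on $v$; a short case analysis (``columns $a,b$ equal'' is an equivalence relation, and two columns in distinct classes must be one all-zero and one all-one) then forces $B$ either to be a single all-one row, whence $h$ is constant, or a single all-one column $j_0$, whence $h(\rho)=\rho(j_0)$.

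Finally I would assemble everything. Put $A_j=\{t:h_t(\rho)=\rho(j)\text{ for all }\rho\}$ for $j\in[m]$, and let $C$ consist of the remaining $t$, on which $h_t$ is constant, say with value $c_t$. By the structure result $[N]=C\cup A_1\cup\dots\cup A_m$, and the $A_j$ are pairwise disjoint (if $\rho(j)=\rho(j')$ for all $\rho$ then $j=j'$), so $W$ is exactly the image of the block permutation set with blocks $A_1,\dots,A_m$ and fixed entries $c_t$ on $C$. To get uniformity, apply the count from the second step to $w^e$ and $w^{(p\,q)}$: for $t\in A_j$ we have $h_t(e)=j$ and $h_t((p\,q))=(p\,q)(j)$, while $h_t$ is constant for $t\in C$, so the $2s^2$ coordinates on which $w^e$ and $w^{(p\,q)}$ differ are precisely $A_p\cup A_q$; hence $|A_p|+|A_q|=2s^2$ for every $p\ne q$, and since $m\ge 3$ this forces $|A_1|=\dots=|A_m|=s^2$ (so $s^2$ is a positive integer and each $A_j$ is nonempty, using $|W|=m!$). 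Thus $W$ is the image of an $s^2$-uniform block set. I expect the structural claim of the third paragraph to be the main obstacle; the one other delicate point is the careful use of algebraic independence in the second.
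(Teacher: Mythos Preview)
Your argument is correct, and its first half coincides with the paper's: both derive the ``transposition dichotomy'' (passing from $w^\rho$ to $w^{(pq)\rho}$ only swaps some $\alpha_p$'s and $\alpha_q$'s, and exactly $2s^2$ coordinates change) by the same cross-multiplication trick to eliminate $s^2$ and then comparing coefficients using algebraic independence.

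The second half, however, is genuinely different. The paper never isolates your coordinate functions $h_t$. Instead it keeps working geometrically: having defined $U_\pi(ij)$ as the set of coordinates that switch from $\alpha_{\pi(i)}$ to $\alpha_{\pi(j)}$ when passing from $y_\pi$ to $y_{\pi(ij)}$, it performs a \emph{second} distance computation, expressing $\|y_{\pi(ij)}-y_{\pi(ik)}\|^2$ in two ways and comparing the coefficient of $\alpha_{\pi(j)}\alpha_{\pi(k)}$ to extract $|U_\pi(ij)\cap U_\pi(ik)|=s^2$. Combined with $|U_\pi(ij)|+|U_\pi(ji)|=2s^2$ this forces all $|U_\pi(ij)|=s^2$, hence $U_\pi(ij)$ is independent of $j$, and a short generator argument then shows it is independent of $\pi$ as well, yielding the blocks. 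By contrast, you replace this second distance computation by the purely combinatorial classification of maps $h\colon S_m\to[m]$ satisfying the dichotomy. Your proof of that classification (invariance of $h^{-1}(v)$ under $\mathrm{Stab}(v)$, then the $0$--$1$ matrix analysis using $\sum_v B_{v,\pi(v)}=1$) is clean and correct; it is a nice self-contained statement that uses only the dichotomy together with the fact that $h$ is a function, and it immediately gives the block structure before uniformity is even addressed. The paper's route is perhaps more direct in that the three-cycle distance computation simultaneously yields the block structure and the size $s^2$; your route separates these two conclusions, obtaining the block structure first from the lemma and the common block size only afterwards from $|A_p|+|A_q|=2s^2$ and $m\ge 3$. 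Both are of comparable length, and your assumption $m\ge 3$ is also implicit in the paper's proof.
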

\begin{proof} 
Let $W\subset Y$ be congruent to $sX$.  As there is an isometry from $sX$ to 
$W$, we may find a bijection $\theta\colon X\to W$ with 
$\|\theta(x)-\theta(x')\|=s\|x-x'\|$ for all $x$, $x'\in X$.  
For each
  permutation $\pi$, let 
$x_\pi=(\alpha_{\pi(1)},\alpha_{\pi(2)},\ldots,\alpha_{\pi(m)})$ and let
$y_\pi=\theta(x_\pi)$.  So $W=\{y_\pi:\pi\in S_m\}$.  We denote the identity
permutation by $\id$.

We first consider the points $y_\id$ and $y_{(12)}$. Since
$\|x_\id-x_{(12)}\|^2=2(\alpha_2-\alpha_1)^2$ we have that
$\|y_\id-y_{(12)}\|^2=2s^2(\alpha_2-\alpha_1)^2$. Similarly,
$\|y_\id-y_{(13)}\|^2=2s^2(\alpha_3-\alpha_1)^2$. 

For distinct $i$, $j\in[mn]$, let $\lambda_{ij}$ denote the number of
coordinates in which $y_\id$ takes value $\alpha_i$ but $y_{(12)}$
takes value $\alpha_j$.  Similarly, let $\mu_{ij}$ denote the number of
coordinates in which $y_\id$ takes value $\alpha_i$ but $y_{(13)}$
takes value $\alpha_j$.  Then
\[
\|y_\id-y_{(12)}\|^2=\sum_{i,j}\lambda_{ij}(\alpha_i-\alpha_j)^2\qquad\textrm{and}\qquad
\|y_\id-y_{(13)}\|^2=\sum_{i,j}\mu_{ij}(\alpha_i-\alpha_j)^2
\]
and so
\[
(\alpha_3-\alpha_1)^2\sum_{i,j}\lambda_{ij}(\alpha_i-\alpha_j)^2=(\alpha_2-\alpha_1)^2\sum_{i,j}\mu_{ij}(\alpha_i-\alpha_j)^2.
\]

By comparing the coefficients of appropriate monomials in $\alpha_1$,
$\ldots\,$, $\alpha_m$, it is easy to check that all of the $\lambda_{ij}$ and
$\mu_{ij}$ must be zero, except for $\lambda_{12}$ and $\mu_{13}$ which must be
equal. (For example considering the coefficient of $\alpha_3^4$ and
using the fact that $\lambda_{ij}\ge 0$ for all $i,j$ reveals that
$\lambda_{ij}=0$ if either $i$ or $j$ is 3; then considering
the coefficient of
$\alpha_3^2\alpha_i\alpha_j$ gives that $\lambda_{ij}=0$ unless $(i,j)=(1,2)$.)
In other words, this means that when changing from $y_\id$ to
$y_{(12)}$, all that happens is that some coordinates change from $\alpha_1$ to
$\alpha_2$ and some other coordinates change from $\alpha_2$ to
$\alpha_1$
Moreover, as
$\lambda_{12}=2s^2$, we see that the total number of 
coordinates
that change is $2s^2$.

\newcommand \ai{\alpha_{\pi(i)}}
\newcommand \aj{\alpha_{\pi(j)}}
\newcommand \ak{\alpha_{\pi(k)}}

More generally, the same argument shows that, for any permutation $\pi$
and transposition $(ij)$, to change from $y_\pi$ to $y_{\pi(ij)}$ it is
only necessary to change some coordinates from $\ai$ to $\aj$ and some
other coordinates from $\aj$ to $\ai$.
Let $U_\pi(ij)$ be the set of coordinates which change from
$\ai$ to $\aj$ when one changes $y_\pi$ into $y_{\pi(ij)}$.
Again as above, we have
$|U_\pi(ij)|+|U_\pi(ji)|=2s^2$
for all $\pi$, $i$ and $j$. As the set $U_\pi(ij)$ is a subset of the
coordinates in $y_\pi$ which equal $\ai$, we have that, for fixed
$\pi$ and $j$, the sets $U_\pi(ij)$ are pairwise disjoint as $i$
varies.  Moreover, for distinct $i$ and $j$ the sets $U_\pi(ij)$ and
$U_\pi(ji)$ are disjoint.  Of course, for fixed $\pi$ and $i$, the
sets $U_\pi(ji)$ could intersect (and, indeed, we shall show that not
only do they intersect but, in fact, they are identical).

Now, consider the distance $\|y_{(ij)\pi}-y_{(ik)\pi}\|^2$.  
The vectors $x_{\pi(ij)}$ and
$x_{\pi(ik)}$ are equal in all coordinates except for coordinates $i$, $j$
and $k$.  In these coordinates the vector $x_{\pi(ij)}$ takes values
$\aj$, $\ai$ and $\ak$ respectively; while the vector $x_{\pi(jk)}$ takes the
values $\ak$, $\aj$, and $\ai$ respectively.
Hence
\[
\|x_{\pi(ij)}-x_{\pi(ik)}\|^2=\left((\ai-\aj)^2+(\aj-\ak)^2+(\ak-\ai)^2\right)
\]
and so 
\[
  \|y_{\pi(ij)}-y_{\pi(ik)}\|^2=s^2\left((\ai-\aj)^2+(\aj-\ak)^2+(\ak-\ai)^2\right).
\]

An alternative way to calculate this distance is to consider explicitly how
the vector $y_{\pi(ij)}$ differs from the vector
$y_{\pi(ik)}$.  Imagine that we first change $y_{\pi(ij)}$ to $y_\pi$ and then
to $y_\pi(jk)$.  
When going from
$y_{\pi(ij)}$ to $y_\pi$ the coordinates in $U_\pi(ij)$ change from $\aj$
to $\ai$ and the coordinates in $U_\pi(ji)$ 
change from $\ai$ to $\aj$. Then when
going from $y_\pi$ to $y_{\pi(ik)}$ the coordinates in $U_\pi(ik)$ change
from $\ai$ to $\ak$ and the coordinates in $U_\pi(ki)$ change from $\ak$ to
$\ai$. Hence
\begin{align*}
\|y_{\pi(ij)}-y_{\pi(ik)}\|^2=|U_\pi(ji)|&(\aj-\ai)^2 + |U_\pi(ki)|(\ak-\ai)^2\\
&+|U_\pi(ij)\setminus U_\pi(ik)|(\aj-\ai)^2\\
&+|U_\pi(ik)\setminus U_\pi(ij)|(\ak-\ai)^2\\
&+|U_\pi(ij)\cap U_\pi(ik)|(\aj-\ak)^2.
\end{align*}

We now have two expressions for $\|y_{\pi(ij)}-y_{\pi(jk)}\|^2$, each of which
is a polynomial in $\alpha_1$, $\ldots\,$, $\alpha_m$ with rational coefficients
(recall that $2s^2$ is an integer).  
Comparing coefficients of $\aj\ak$ gives that
$|U_\pi(ij)\cap U_\pi(ik)|=s^2$.
In particular for every $\pi$ and every $i,j$ we have that $|U_\pi(ij)|\ge
s^2$ and thus, as $|U_\pi(ij)|+|U_\pi(ji)|=2s^2$, that 
$|U_\pi(ij)|=s^2$ for all $\pi$, $i$
and $j$. Therefore the set $U_\pi(ij)$ is
independent of $j$.

Finally, it follows easily from the definition that
$U_\pi(ij)=U_{\pi(ij)}(ij)$. Hence, for any $\ell\not=i$, we have
$U_{\pi(i\ell)}(ij)=U_{\pi(i\ell)}(i\ell)=U_{\pi}(i\ell)=U_\pi(ij)$.  But $S_n$ is
generated by the transpositions of the form $(i\ell)$ for
$\ell\in[m]\setminus\{i\}$, so in fact $U_\pi(ij)$ is independent of
$\pi$ (in addition to being independent of $j$).  We may thus define
$I_i=U_\pi(ij)$.  It is now clear that $\{y_\pi:\pi\in S_k\}$ is
exactly the image of an $s^2$-uniform block set with blocks $I_1$,
$I_2$, $\ldots\,$, $I_k$.
\end{proof}
\begin{prop}
Conjecture~\ref{products} implies Conjecture~\ref{uniform}.
\end{prop}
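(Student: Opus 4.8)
The plan is to deduce Conjecture~\ref{uniform} from Conjecture~\ref{products} using Lemma~\ref{2.3} as the engine, first for the template $12\ldots m$ and then for general templates. So fix $m$ and $k$, and pick algebraically independent reals $\alpha_1,\ldots,\alpha_m$. Let $X\subset\R^m$ be the set of all permutations of $(\alpha_1,\ldots,\alpha_m)$, so $X$ is transitive under the obvious action of $S_m$. Apply Conjecture~\ref{products} to $X$ and $k$: there is an $N$ such that some scaling $sX^N$ is $k$-Ramsey for $X$. Write $Y=\{\alpha_1,\ldots,\alpha_m\}^{mN}$, regarded as the image of $[m]^{mN}$ under $i\mapsto\alpha_i$ coordinatewise, and note $X^N\subset Y$; thus after rescaling we may assume $Y$ (equivalently $[m]^{mN}$) is $k$-Ramsey for the set $\tfrac1sX$, i.e.\ any $k$-colouring of $Y$ contains a subset congruent to $\tfrac1s X$.

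Next I would run the colouring transfer. Given a $k$-colouring of $[m]^{mN}$, push it to $Y$ via the identification above; by the Ramsey property there is a monochromatic $W\subset Y$ congruent to $\tfrac1sX$. Writing $t^2=1/s^2$ (and noting we may clear denominators so that $t^2$ is a positive integer — this is exactly the role played by ``$2s^2$ is an integer'' in Lemma~\ref{2.3}, and is arranged by choosing the original scaling appropriately), Lemma~\ref{2.3} tells us that $W$ is precisely the image of a $t^2$-uniform block set $B\subset[m]^{mN}$ with template $12\ldots m$. Since $W$ is monochromatic in the pushed colouring and the identification is colour-preserving, $B$ is monochromatic in the original colouring of $[m]^{mN}$. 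Setting $n=mN$ and $d=mt^2$ gives a monochromatic uniform block set of degree $d$ with template $12\ldots m$, which is Conjecture~\ref{uniform} for that template.

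For a general template $\tau\in[m]^\ell$, I would reduce to the case just handled exactly as in the equivalence of Conjectures~\ref{hj} and~\ref{patterns}: given a $k$-colouring of $[m]^n$, use the coordinatewise map induced by $j\mapsto\tau_j$ to pull it back to a $k$-colouring of $[\ell]^n$, apply the template-$12\ldots\ell$ case over alphabet $[\ell]$ to obtain a monochromatic uniform block set $B'\subset[\ell]^n$ with blocks $I_1,\ldots,I_\ell$ all of size $t^2$, and then observe that the image of $B'$ under $j\mapsto\tau_j$ is exactly a uniform block set in $[m]^n$ with template $\tau$ and the same block sizes (the blocks carrying the symbol $\tau_{j}$ are grouped together, but uniformity is preserved). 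This image is monochromatic in the original colouring, completing the proof.

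The step I expect to be the main obstacle is the bookkeeping around scalings and integrality: Conjecture~\ref{products} only asserts \emph{some} scaling works, and Lemma~\ref{2.3} needs $2s^2\in\Z$ (or $t^2\in\Z$) for the coefficient-comparison argument over $\Q$ to go through. The hard part will be checking that one can always arrange the relevant scale factor to be of the required arithmetic type — either by noting that $k$-Ramseyness is preserved under further integer rescaling (so we may enlarge $s$ to make $s^2$ a half-integer), or by absorbing the discrepancy into the ambient power $N$. Everything else — transitivity of $X$, the colour-preserving identifications, and the template reduction — is routine once this point is pinned down.
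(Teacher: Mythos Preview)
Your approach is essentially identical to the paper's: reduce to the template $12\ldots m$, embed via algebraically independent $\alpha_i$, apply Conjecture~\ref{products} to the permutation set $X$, and invoke Lemma~\ref{2.3} to identify the resulting monochromatic copy as a uniform block set; then pull back along $j\mapsto\tau_j$ for general templates. All of this is correct and matches the paper.

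The one thing you have wrong is your ``main obstacle''. There is no integrality issue to arrange. Lemma~\ref{2.3} is stated for arbitrary $s>0$: its \emph{conclusion} is that any copy of $sX$ inside $Y$ is the image of an $s^2$-uniform block set, and in the course of the proof the fact that $2s^2$ (and then $s^2$) is a nonnegative integer is \emph{derived}, not assumed---it falls out of the equation $|U_\pi(ij)|+|U_\pi(ji)|=2s^2$ and then $|U_\pi(ij)|=s^2$, where the left-hand sides are cardinalities of coordinate sets. The parenthetical ``recall that $2s^2$ is an integer'' in the proof of Lemma~\ref{2.3} refers back to this earlier step, not to an external hypothesis. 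So you do not need to rescale, clear denominators, or absorb anything into $N$; once Conjecture~\ref{products} hands you a monochromatic copy of $sX$ (your $tX$) inside $Y$, the integrality of $s^2$ is automatic. With that correction, your write-up is a valid proof.
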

\begin{proof}
  Suppose Conjecture~\ref{products} holds. We shall deduce 
Conjecture~\ref{uniform} for the
  template $1\ldots m$: the full Conjecture~\ref{uniform} 
then follows exactly as
  Conjecture~\ref{patterns} follows from Conjecture~\ref{hj}.

  Fix $k$ and let $\alpha_1,\ldots,\alpha_m$ be algebraically
  independent real numbers. Form the set $X$ as in Lemma~\ref{2.3}. By
  Conjecture~B there are $n$ and $s$ such that $X^n$
  is $k$-Ramsey for $sX$. Let $Y$ be as defined in Lemma~\ref{2.3}.
  
  Suppose that $[m]^{mn}$ is $k$-coloured. This induces a colouring of
  $Y$, and $Y$ contains $X^n$. Hence $Y$ contains a monochromatic copy
  of $sX$, and by Lemma~\ref{2.3} this is exactly the image of an
  $s^2$-uniform block set with template $1\ldots m$.
\end{proof}

\end{section}
\begin{section}{The first cases of Conjecture~\ref{patterns}}\label{cases}
In this section, we consider some small cases of Conjecture \ref{patterns}.  
As we mentioned in \S\ref{if}, the case $m=1$ is trivial.  In the case 
$m=2$ the conjecture is
true for all templates by an easy application of
Ramsey's theorem, as we now explain.

Consider the template $\underbrace{1\ldots1}_r\underbrace{2\ldots2}_s$ and
let $k\in\N$.  
By Ramsey's theorem, there exists a positive integer $n$ such that
whenever $[n]^{(s)}$ is $k$-coloured it has a monochromatic subset of
size $r+s$.  Suppose now $[2]^n$ is $k$-coloured.  We induce a
$k$-colouring of $[n]^{(s)}$ by giving $A\in[n]^{(s)}$ the colour of the word
$a^A\in[2]^n$ defined by
$$(a^A)_i=\left\{\begin{array}{cl}
1&\hbox{if }i\not\in A\\
2&\hbox{if }i\in A
\end{array}\right..$$
Let $B\in[n]^{(r+s)}$ be monochromatic.  Now take $I_1$, $I_2$, $\ldots\,$,
$I_{r+s}$ to be the singleton subsets of $B$ and $a_i=1$ for all
$i\not\in B$, giving our \mono\ block set.

We now prove the first non-trivial cases of Conjecture~\ref{patterns}.
In a sense, the first non-trivial case corresponds to the template
$123$.  In fact, we prove the stronger result that the conjecture holds
for all
templates of the form $1\ldots12\ldots23$.  Note that in what follows,
the proof can be simplified for templates with only one 1, i.\,e.~those
of the form $12\ldots23$:  in this case, the application of van der
Waerden's theorem is replaced by the pigeonhole principle.

\begin{theorem}\label{00112}
Conjecture \ref{patterns} is true for $m=3$ and templates of the form
$\underbrace{1\ldots1}_{r}\underbrace{2\ldots2}_{s}3$.
\end{theorem}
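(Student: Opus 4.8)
The plan is to produce a \emph{uniform} monochromatic block set --- one in which all $r+s+1$ blocks have a common size $\ell$ --- of degree exactly $d=(r+s+1)\ell$, where $\ell$ and the ambient dimension $n$ are to be fixed in advance. It is convenient to take the background value to be $1$ throughout, so that a block set is then determined by a choice of pairwise disjoint size-$\ell$ blocks $I_1,\ldots,I_{r+s+1}\subset[n]$: its words are obtained by selecting one block to be all-$3$, selecting an $s$-set of the remaining blocks to be all-$2$, and leaving the other $r$ blocks (and the background) at value~$1$. So the task is to choose the blocks so that the colour of such a word depends neither on which block carries the $3$ nor on which $s$-set carries the $2$'s. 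Note that, unlike in the $m=2$ case treated above, we cannot hope to take the $I_j$ to be singletons; the blocks must genuinely be thick, and controlling their common size is part of the problem.

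I would split the argument into a ``qualitative'' stage and a ``quantitative'' stage. In the qualitative stage I ignore the exact value of the degree and build a large monochromatic structure. For the $2$-versus-$1$ choices this is in the spirit of the $m=2$ argument: one applies Ramsey's theorem to the colouring $A\mapsto$ (colour of the word that is $2$ on $\bigcup_{j\in A}B_j$ and $1$ elsewhere) to pass to a sub-pool of candidate blocks $B_1,\ldots,B_N$ on which re-selecting which $s$ of them are the $2$-blocks does not change the colour --- \emph{provided} the block playing the role of the $3$ is held fixed. Installing the $3$, and doing so compatibly for every choice of $2$-set, is the part that needs new input.

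This is where van der Waerden's theorem enters. The candidate blocks $B_i$ have sizes that we do not control, so to get a \emph{uniform} block set of a \emph{prescribed} degree we must be able to trim and recombine them, and trimming is only harmless along a structure on which the colour is locally constant. One reads off, from the given colouring, an induced colouring of an interval $[N]$ by recording how the colours of the words of a candidate block set vary as the common block length grows; van der Waerden's theorem then supplies a long monochromatic \ap\ of admissible block lengths, whose equal spacing is exactly what should let one move the $3$ from one block to another --- and fuse a $2$-block with the $3$-block --- in colour-preserving steps. Intersecting this with the qualitative stage yields a uniform monochromatic block set of exactly the required degree. For templates with only one $1$, i.e.\ those of the form $12\ldots23$, there is in effect a single configuration of $1$'s, the \ap\ apparatus collapses, and the pigeonhole principle does the work of van der Waerden's theorem.

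The main obstacle, I expect, is the coupling between the \emph{unique} $3$ and the \emph{multiplicity} of $2$'s. Once the $2$-configuration has been stabilised, moving the $3$ from block $B_i$ to block $B_j$ necessarily permutes the roles of the $2$- and $1$-blocks, and a direct Ramsey argument gives no reason for the two orientations of such a move to be equally coloured --- it returns two colours, one per orientation, and the van der Waerden step exists precisely to reconcile them. Getting the quantitative bookkeeping consistent --- the block size, the number $N$ of candidate blocks, and the length of the \ap\ all have to be chosen to fit together --- is where the real work lies. Once Theorem~\ref{00112} is established, Lemma~\ref{2.3} and the equivalences of \S\ref{if} immediately yield new examples of Ramsey sets.
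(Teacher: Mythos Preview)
Your proposal has the right ingredients --- Ramsey's theorem, van der Waerden's theorem, and a uniform block set --- and your remark that the $12\ldots23$ case collapses van der Waerden to pigeonhole is exactly right. But the specific role you assign to van der Waerden is not the one that works, and as written the plan has a genuine gap.

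You propose to colour an interval by \emph{block length} and extract an arithmetic progression of admissible lengths. But an AP of lengths does not let you move the $3$ among blocks of a single fixed size, which is what a uniform block set of a prescribed degree requires; varying the length changes the whole configuration rather than permuting the role of the $3$ within it. In the paper's argument the van der Waerden parameter is a \emph{positional shift}, not a length. The structure is: a first Ramsey step (on $[n]^{(u)}$, with a product colouring over all placements of the $3$'s inside the $2/3$ support) makes the positions of the $1$'s irrelevant, so that colour depends only on the relative order of $2$'s and $3$'s; a second Ramsey step (on $[b]^{(t)}$, with a product colouring over a bounded family of translates) then stabilises which $t$-subset of the $2/3$ support carries the $3$'s, for each of $a$ shifts; finally one colours $[0,a-1]$ by $j\mapsto$ (common colour at shift $j$) and applies van der Waerden to obtain a progression of length $r+1$. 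The point is that moving the $3$ from one block to another while simultaneously reassigning a $1$-block and a $2$-block changes the offset of the $3$'s within the $2/3$ support by exactly the number of $1$-blocks to its left, which ranges over $\{0,1,\ldots,r\}$; an $(r+1)$-term AP of shifts is precisely what absorbs this. Your single Ramsey step (``stabilise the $2$'s with the $3$ held fixed'') is not sufficient on its own and is not how the two Ramsey applications in the actual proof are organised: they eliminate first the $1$'s and then the $3$'s, with van der Waerden handling the residual shift. The ``quantitative bookkeeping'' you defer is not merely bookkeeping; it is where the mechanism lives.
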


\begin{proof}
For the sake of definiteness, we begin by fixing the values of certain
parameters.

Let $\ell=r+s+1$.

By van der Waerden's theorem, there exists a positive integer
$a$ such that whenever $[0,a-1]$ is $k$-coloured there exists
a \mono\ \ap\ or length $r+1$.  Let $t=a!$ and $d=\ell t$.

By Ramsey's theorem, there exists a positive integer $b$ such that
whenever $[b]^{(t)}$ 
is $k^a$-coloured, there exists a \mono\ subset
of order $d$.  Let $u=b+a-1$ and $v=u+tr$

By Ramsey's theorem again, there exists a positive integer $n$ such that
whenever $[n]^{(u)}$ is $k^{u\choose t}$-coloured there exists a \mono\ 
subset of order $v$.

Now suppose $[3]^n$ is $k$-coloured, say by $c_1\colon[3]^n\to[k]$.
We shall consider only the set $A\subset[3]^n$ of those words
containing precisely $t$ 3's and $u-t$ 2's.

Let $B\subset[2]^n$ be the set of words of length $n$ containing
$u$ 2's and $n-u$ 1's, and let $C\subset\{2,3\}^u$ be the collection of
words of length $u$ containing $t$ 3's and $u-t$ 2's.  There is an obvious
bijection $\theta\colon B\times C\to A$:  define $\theta(w,x)$ to be the word
obtained by replacing the $u$ 2's in $w$ by the letters of the word $x$
(in order).  We may thus induce a $k^{u\choose t}$-colouring $c_2$ of $B$
by the complete colouring of $\{(w,x):x\in C\}$.  Formally, let
$C=\{x_1,\ldots\,,x_{u\choose t}\}$, 
and define $c_2\colon B\to[k]^{u\choose t}$
by $c_2(w)=(c_1(\theta(w,x_1)),\ldots\,,c_1(\theta(w,x_{u\choose t})))$.

Similarly to the case $m=2$, this yields a ${k^{u\choose t}}$-colouring
$c_3$ of $[n]^{(u)}$: we give a set $X\in[n]^{(u)}$ the colour of the word
$w^X\in B\subset[2]^n$ with $$(w^X)_i=\left\{\begin{array}{cl}
1&\hbox{if }i\not\in X\\
2&\hbox{if }i\in X
\end{array}\right..$$
By definition of $n$, there is a $c_3$-\mono\ subset $D\subset[n]$ of order
$v$.  For notational simplicity, we assume \Wlog\ that $D=[v]$.  

What we have
proved is that for words $w\in A$
where all 2's and 3's are contained within the first
$v$ positions, the colour $c_1(w)$ depends only on the relative ordering
of the 2's and the 3's: `the  positions of the 1's do not matter'.
Thus we may induce a $k$-colouring $c_4$ of the subset $E\subset\{2,3\}^u$
of words with precisely $t$ 3's.  Formally, we define
$c_4(w)=c_1(w')$ where $w'$ is the word consisting of $w$ followed by
$n-u$ 1's.

We next induce a $k^a$-colouring $c_5$ of $[b]^{(t)}$ by colouring the
set $R\in[b]^{(t)}$ according to the colours of the following $a$ words:
the word the positions of whose 2's form the set $R$,
the word the positions of whose 2's form the set $R+1$,
the word the positions of whose 2's form the set $R+2$, and so on.
That is, we define $c_5\colon[b]^{(t)}
\to [k]^a$ by $c_5(R)=(c_4(w^{R,0}),\ldots\,,c_4(w^{R,a-1}))$
where $w^{R,j}\in E$ is defined by
$$(w^{R,j})_i=\left\{\begin{array}{cl}
2&\hbox{if }i-j\not\in R\\
3&\hbox{if }i-j\in R
\end{array}\right..$$
By definition of $b$, we may extract a $c_5$-\mono\ subset
$F\subset[b]$ of order $d$.

This now induces a $k$-colouring $c_6$ of $[0,a-1]$ by
$c_6(j)=c_5(R+j)$ where $R\in F^{(t)}$.  
(Note that this does not depend on the choice of $R$.)
So by definition of $a$, there is a \mono\ \ap\ of length $r+1$,
say $p-rq$, $p-(r-1)q$, $p-(r-2)q$,
 $\ldots\,$, $p$.

Write $F=\{i_1,i_2,\ldots\,,i_d\}$ with $i_1<i_2<\cdots<i_d$.  For
$1\le j\le\ell$, let
$$I_j=\{p+i_{(\lambda\ell+j-1)q+\mu}+rq\lambda:
0\le\lambda<{t\over q},1\le\mu\le q\}.$$
Since $t=a!$ and $q<a$, we must have that $q$ is a factor of $t$,
and so $|I_j|=t$ for each $j$.  Furthermore, the largest element appearing
in any of the $I_j$ is $p+i_d+r(t-q)<a-1+b+rt=v$, so each $I_j\subset[v]$.

For $i\not\in\bigcup_{j=1}^\ell I_j$, let
$$a_i=\left\{\begin{array}{cl}
2&\hbox{if }i\le v\\
1&\hbox{if }i>v
\end{array}\right..$$
Let $S$ be the set of rearrangements of the template
$\underbrace{1\ldots1}_{r}\underbrace{2\ldots2}_{s}3$.
It remains to check that for each $\pi\in S$ the words $a^\pi$ all have the
same colour.

So let $\pi\in S$.  It is clear that $a^\pi$ contains $t$ 3's and
$rt$ 1's, and so
$v-tr-t=u-t$ 2's. 
Moreover, as each $I_j\subset[v]$, 
all of the 2's and 3's in $a^\pi$ are contained
within the first $v$ positions of $a^\pi$.  Hence the colour of $a^\pi$ is
determined completely by the relative positions of the $2$'s and 3's.

Write $I$ for the set of positions of the 3's amongst the 2's and 3's in 
$a^\pi$ (so $I\in[u]^{(t)}$).  As our template contains only one 3, 
there is a unique $j\in[\ell]$
such that $\pi_j=3$.  Let $h$ be the number of 1's appearing to the left
of the unique 3 in $\pi$.  Then
\begin{eqnarray*}
I&=&\{p+i_{(\lambda\ell+j-1)q+\mu}+rq\lambda-(r\lambda+h)q:
0\le\lambda<{t\over q},1\le\mu\le q\}\\
&=&\{p+i_{(\lambda\ell+j-1)q+\mu}-hq:0\le\lambda<{t\over q},
1\le\mu\le q\}\\
&=&X_j+p-hq,
\end{eqnarray*}
where 
$$X_j=\{i_{(\lambda\ell+j-1)q+\mu}:0\le\lambda<{t\over q},1\le\mu\le q\}
\in F^{(t)}$$
and $0\le h\le r$.
The result follows.
\end{proof}

We have now proved Conjecture \ref{patterns} for templates of the form
$1\ldots12\ldots23$.  So the first open case is:

\begin{problem}\label{001122}
Prove Conjecture \ref{patterns} with $m=3$ for the template $112233$.
\end{problem}

Finally, we remark that the uniformity of the block sets in our proof
of Theorem~\ref{00112} is already enough to yield some new examples of
Ramsey sets.  Indeed, from the template
$\underbrace{1\ldots1}_{r}\underbrace{2\ldots2}_{s}3$ we obtain that,
for any distinct reals $\alpha$, $\beta$ and $\gamma$, the set
$X\subset\R^{r+s+1}$ consisting of all those points $x$ having $r$
coordinates $\alpha$, $s$ coordinates $\beta$ and one coordinate
$\gamma$ is Ramsey.  In general, the set $X$ does not satisfy the
conditions of \kriz's theorem~\cite{kriz}.  However, we do not know
whether or not it embeds into a larger set that does.
\end{section}

\begin{section}{Not all spherical sets are \fp}\label{gons}
In this section, we show that our conjecture is genuinely different from
the old conjecture~\cite{conj}, by showing that there exists a finite spherical
set that is not \fp.  Specifically, we show that if $k\geqslant16$ then there
exists a cyclic $k$-gon that is not \fp.

One natural approach is to aim for a non-constructive proof showing that
almost no cyclic $k$-gon is subtransitive.  However, the space of cyclic
$k$-gons has $k$ degrees of freedom, whereas the space of (non-isometric)
orbits of a fixed group of isometries of $\R^n$ can have many more.  This
is the main obstacle that we have to overcome.

For convenience, we consider labelled, oriented
$k$-gons, i.\,e.~we label the vertices $1$, $2$, $\ldots\,$, $k$ in clockwise
order and consider two the same if there is an isometry between them
preserving the labels.  Now suppose that $x_1\ldots x_k$ is a cyclic
$k$-gon with circumcentre $x_0$ and circumradius $r$.  Then it is uniquely
determined by the ordered $k$-tuple $(r,\angle x_1x_0x_2, \angle x_1x_0x_3,
\ldots\,,\angle x_1x_0x_k)$.
This allows us to think of the
set of cyclic $k$-gons as a subset $\p\subset\R^k$ of non-zero 
($k$-dimensional Lebesgue) measure.

We show that, for any $k\geqslant16$, the set of \fp\ cyclic $k$-gons 
has measure zero.  
The reader is cautioned that, throughout what follows, when we refer to
`orthogonal planes' we use the term in the sense of orthogonal affine
subspaces of a real vector space.  That is, two planes $\Pi_1$ and $\Pi_2$
are orthogonal if for all $x_1$, $y_1\in\Pi_1$ and $x_2$, $y_2\in\Pi_2$
we have $(x_1-y_1)\perp(x_2-y_2)$.  (So, in particular, it is not possible
to find two orthogonal planes in $\R^3$.)

The idea of the proof is as follows.  We show that every transitive
k-gon can be embedded in some $\R^n$ as $g_1(y)\ldots g_n(y)$ for some
$y\in\R^n$ and one of countably many different $k$-tuples
$(g_1,\ldots\,,g_k)$ of orthogonal transformations of $\R^n$.  So we
may assume that $n$ and $(g_1,\ldots\,,g_k)$ are given.  We begin by
fixing a `reference' $k$-gon $g_1(x)\ldots g_k(x)$.  Now, for any
$y\in\R^n$, the distances $\|g_i(x)-g_i(y)\|$ ($1\le i\le k$) are all
the same.  Hence we would like to show that if $x_1\ldots x_k$ is a
$k$-gon then the set of $k$-gons $y_1\ldots y_k$ with all distances
$\|x_i-y_i\|$ the same has dimension strictly less than $k$.
Unfortunately, this is not quite true---there are many ways to
construct such a $y_1\ldots y_k$ in a plane orthogonal to the plane of
$x_1\ldots x_k$.  But this difficulty is easily surmounted: instead of
fixing a single reference $k$-gon, we start from a (necessarily
finite) maximal pairwise-orthogonal family of $k$-gons in $\R^n$.  The
heart of the proof is in the following lemma.

\begin{lemma}\label{distances}
Let $x_1\ldots x_k$ be a fixed cyclic $k$-gon in $\R^n$ with $k\geqslant 16$.  
Let
$\Q\subset\p$ be the set of cyclic $k$-gons which can be embedded in $\R^n$ as
$y_1\ldots y_k$ in such a way that
\newcounter{bean}
\begin{list}{(\roman{bean})}{\usecounter{bean}}
\item $\|x_1-y_1\|=\|x_2-y_2\|=\cdots=\|x_k-y_k\|$; and
\item the planes of $y_1\ldots y_k$ and $x_1\ldots x_k$ are non-orthogonal.
\end{list}
Then $\Q$ has measure zero.
\end{lemma}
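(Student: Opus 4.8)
The plan is to set up a coordinate system adapted to the plane $\Pi$ of the reference $k$-gon $x_1\ldots x_k$, and to show that the conditions (i) and (ii), together with the requirement that $y_1\ldots y_k$ be cyclic, impose enough algebraic constraints to cut down the dimension of $\Q$ below $k$. Since $\Q$ is (the image of) a semialgebraic set, it suffices to exhibit a single algebraic relation that holds identically on $\Q$ but not on all of $\p$; then $\Q$ lies in a proper subvariety of the $k$-dimensional parameter space $\p$ and hence has measure zero. So I would not try to parametrise $\Q$ directly, but rather hunt for such a relation.

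First I would fix notation: write $x_0$ for the circumcentre of $x_1\ldots x_k$ and $r$ for its circumradius, so all $x_i$ lie on a circle of radius $r$ in the plane $\Pi$. Given an embedding $y_1\ldots y_k$ satisfying (i) and (ii), let $y_0$ be its circumcentre and $\rho$ its circumradius. Condition (i) says $\|x_i-y_i\|=c$ for all $i$ and some common $c$. The key computation is to expand $\|x_i-y_i\|^2$ by splitting each $y_i$ into its component $y_i'$ in the plane $\Pi$ (more precisely, in the affine $2$-flat through $x_0$ parallel to $\Pi$) and its component $z_i$ orthogonal to $\Pi$. Then $\|x_i-y_i\|^2=\|x_i-y_i'\|^2+\|z_i\|^2$, and $\|x_i-y_i'\|^2=\|x_i-x_0\|^2+\|y_i'-x_0\|^2-2\langle x_i-x_0,\,y_i'-x_0\rangle = r^2+\|y_i'-x_0\|^2-2\langle x_i-x_0,\,y_i'-x_0\rangle$. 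The non-orthogonality hypothesis (ii) is exactly what guarantees that the projection of the plane of $y_1\ldots y_k$ onto $\Pi$ is not a single point, i.e.\ the $y_i'$ genuinely trace out a (possibly degenerate) conic in $\Pi$ rather than collapsing; this is the hypothesis that prevents the ``escape into an orthogonal plane'' described in the paragraph before the lemma.

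Now I would exploit that all of $y_1,\dots,y_k$ lie on a common sphere (circle) of radius $\rho$ about $y_0$: this forces $\|y_i'-x_0\|^2$ and $\|z_i\|^2$ to satisfy an affine relation of the form $\|y_i'-x_0\|^2+\|z_i-w\|^2 = \text{const}$ for a fixed vector $w$ (the orthogonal component of $y_0-x_0$), which after rearranging makes $\|y_i'-x_0\|^2 + \|z_i\|^2$ an affine function of $z_i$. Feeding this back into the expansion of $\|x_i-y_i\|^2=c^2$, the terms $\|z_i\|^2$ cancel against part of the sphere relation, and we are left, for each $i$, with an equation that is \emph{affine} in the in-plane point $y_i' \in \Pi \cong \R^2$ and affine in $z_i$. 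Thinking of the $y_i'$ as points on the circle of radius $\rho$ about $y_0'$ in $\Pi$, parametrised by angles $\phi_i$, each such equation is of the shape $A\cos\phi_i + B\sin\phi_i + (\text{linear in }z_i) = D$ with coefficients $A,B,D$ depending only on $x_i$ and the finitely many ``global'' unknowns ($y_0$, $\rho$, $w$, $c$, and the $2$-plane of $y_1\ldots y_k$, which together form a bounded-dimensional family). Counting: the $k$-gon $y_1\ldots y_k$ contributes its $k$ angular parameters $\phi_i$ plus the finitely many global parameters; but it must also satisfy the $k$ scalar equations from condition (i). For $k$ large the $k$ equations overwhelm the finitely many free global parameters, so the solution set, projected to $\p$, has dimension at most (something like) $k - k + O(1) < k$ — here is where the bound $k\geqslant 16$ enters, since one must carefully bound the dimension of the space of global data (circumcentre, circumradius, orientation of the second plane relative to $\Pi$, the common distance $c$) and check it is strictly less than $k$.

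The main obstacle, and the step I expect to be genuinely delicate, is the bookkeeping of this dimension count: one has to be honest about how much freedom there is in choosing the affine $2$-plane carrying $y_1\ldots y_k$ and its position relative to $\Pi$ (this is where non-orthogonality must be used quantitatively, not just qualitatively), and one must verify that the $k$ equations of type (i) are ``independent enough'' — i.e.\ that they genuinely cut down dimension by roughly $k$ rather than collapsing. Concretely I would phrase this as: the map sending a configuration $(y_1\ldots y_k$ with its global data$)$ to the $k$-tuple $(\|x_1-y_1\|^2,\dots,\|x_k-y_k\|^2)\in\R^k$ has image of positive codimension when restricted to cyclic $k$-gons, and pulling back the diagonal $\{(c^2,\dots,c^2)\}$ therefore lands in a proper subvariety of $\p$. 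Making ``independent enough'' precise — probably by exhibiting, for a generic choice of the global data, an explicit linear-algebra obstruction among the $k$ equations once $k$ exceeds the number of global parameters plus a small constant — is the crux, and is presumably exactly where the threshold $16$ comes from.
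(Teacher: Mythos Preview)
Your outline heads in a plausible direction, but it has two genuine gaps, and the paper's argument is organised quite differently.

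First, your claim that the global data ``together form a bounded-dimensional family'' is unjustified as written. The circumcentre $y_0$ lives in $\R^n$ and the $2$-plane of $y_1\ldots y_k$ lives in the affine Grassmannian of $2$-planes in $\R^n$; both have dimension growing with $n$, so your count would at best yield a threshold for $k$ depending on $n$, not the absolute bound $k\geqslant 16$. The paper disposes of this in one line at the very start: any two non-orthogonal affine $2$-planes lie in a common $5$-dimensional affine subspace, so one may assume $n=5$. That reduction is what makes the global data honestly finite-dimensional, and you are missing it. (Incidentally, your assertion that the projections $y_i'$ lie on a \emph{circle} of radius $\rho$ in $\Pi$ is also not right --- a circle projects to an ellipse --- though this is easily repaired.)

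Second, and more seriously, the ``independent enough'' step is exactly where the difficulty lives, and your proposed fix (a linear-algebra obstruction for generic global data) does not address the real failure mode. Once the circle $\gamma$ through $y_1,y_2,y_3$ is fixed, each remaining $y_i$ must lie in $\gamma\cap S_i$, where $S_i$ is the sphere $\{y:\|y-x_i\|=c\}$; this intersection is finite \emph{unless} $\gamma\subset S_i$, in which case the $i$th equation imposes no constraint whatsoever on $\phi_i$. Genericity of the global data does not rule this out. The paper handles it by a direct and clean use of hypothesis (ii): if $\gamma\subset S_{\ell_1}\cap S_{\ell_2}\cap S_{\ell_3}$ for three distinct indices, then expanding $\|y_j-x_{\ell_i}\|^2=c^2$ for $j\in\{1,2,3\}$ gives $(x_{\ell_{i_1}}-x_{\ell_{i_2}})\perp(y_{j_1}-y_{j_2})$ for all choices, forcing the two planes to be orthogonal. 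Hence at most two of the $S_i$ can contain $\gamma$. With $n=5$ this gives the parametrisation $y_1\in\R^5$ ($5$), $y_2\in S_2$ ($4$), $y_3\in S_3$ ($4$), at most two free angles ($1+1$), so $\Q'$ sits in a finite union of $15$-dimensional manifolds; since $k\geqslant 16$, its image $\Q$ in the $k$-dimensional space $\p$ has measure zero by Sard. The threshold $16$ thus comes from $5+4+4+1+1=15$, not from any count of your global parameters, and the non-orthogonality is used not ``quantitatively'' but in this one algebraic step bounding the number of degenerate indices.
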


\begin{proof}
We may assume \Wlog\ that $n=5$, as any two non-orthogonal planes in
$\R^n$ lie in a $5$-dimensional affine subspace.

We parameterize the space $\p'$ of cyclic $k$-gons in $\R^5$ as follows.
First, choose points $y_1$, $y_2$, $y_3$ in general position in $\R^5$.
These determine a circle $\gamma$, with centre $y_0$, say, and the cyclic
$k$-gon  $y_1\ldots y_k$ is now determined by the angles
$\angle y_1y_0y_4$, $\angle y_1y_0y_5$, $\ldots\,$, $\angle y_1y_0y_k$.  So we
have $\p'\subset\R^{15+(k-3)}=\R^{12+k}$. 

Let $\Q'\subset\p'$ be the set of all possible embeddings $y_1\ldots y_k$
of cyclic
$k$-gons from $\Q$ into $\R^5$ satisfying (i) and (ii).  
Our aim is to show that the dimension
of $\Q'$ is, in fact, much smaller than $12+k$.

Suppose $y_1\ldots y_k\in\Q'$.  Let $r=\|y_1-x_1\|$, let $\gamma$ be
the circle through $y_1$, $y_2$ and $y_3$, and, for each $i\geqslant2$, 
let $S_i$ be the $4$-sphere
with centre $x_i$ and radius $r$.  
For each
$i\geqslant 2$, we must have $y_i\in S_i$.
Moreover, for each $i\geqslant4$ we must also have
$y_i\in\gamma$.  But for each $i$,
either $\gamma\cap S_i$ finite or $\gamma\subset S_i$.

We shall prove that in fact $\gamma\subset S_i$ for at most $2$ distinct
values of $i\geqslant4$.  Assume for a contradiction that $\gamma\subset S_{\ell_1}
\cap S_{\ell_2}\cap S_{\ell_3}$ for some $4\le \ell_1<\ell_2<\ell_3\le k$.
Then for $i$, $j\in\{1,2,3\}$, we have $\|y_j-x_{\ell_i}\|=r$, and so
$y_j\cdot x_{\ell_i}={1\over 2}(\|y_i\|^2+\|x_{\ell_i}\|^2-r)$.  It follows
that if $i_1$, $i_2$, $j_1$, $j_2\in\{1,2,3\}$ 
then $(x_{\ell_{i_1}}-x_{\ell_{i_2}})\cdot(y_{j_1}-y_{j_2})=0$, 
i.\,e.~$x_{\ell_{i_1}}-x_{\ell_{i_2}}$ is perpendicular to
$y_{j_1}-y_{j_2}$.  But this implies that the planes of $x_1\ldots x_k$
and $y_1\ldots y_k$ are orthogonal, a contradiction.  

Hence $\Q'$ is contained in a finite union of $15$-dimensional submanifolds of 
$\R^{k+12}$ ($15=5+4+4+1+1$).
So, for example by Sard's theorem \cite{sard}, $\Q$ has measure zero.
\end{proof}

\begin{theorem}\label{inequiv}
For each $k\geqslant16$, the set of \fp\ cyclic $k$-gons has measure zero.  
In particular,
there exists a cyclic $16$-gon which is not \fp.
\end{theorem}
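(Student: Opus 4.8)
The plan is to cover the set of \fp\ cyclic $k$-gons, regarded as a subset of $\p\subset\R^k$, by countably many sets of measure zero. Each covering set will come from a single `model': by a model I mean a tuple $(n;g_1,\ldots,g_k)$ with $g_1,\ldots,g_k\in O(n)$, and I would associate to it the set $F(n;g_1,\ldots,g_k)\subset\p$ of all cyclic $k$-gons congruent to $g_1(y)\ldots g_k(y)$ for some $y\in\R^n$. Note that conjugating all the $g_i$ by a fixed element of $O(n)$ does not change $F(n;g_1,\ldots,g_k)$, since such a conjugation just post-composes with an isometry.

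The first step is to show that a single countable list of models suffices: every \fp\ cyclic $k$-gon lies in $F(n;g_1,\ldots,g_k)$ for some model on the list. Let $X$ be a \fp\ $k$-gon; it is a subset of a finite transitive set $T$, which I may assume affinely spans $\R^n$. After translating the centroid of $T$ to the origin, the symmetry group $G$ of $T$ is a subgroup of $O(n)$, and it is \emph{finite} because an isometry fixing the affinely spanning set $T$ pointwise must be the identity. As $G$ acts transitively on $T$, the vertices of $X$ can be written as $g_1(x_1),\ldots,g_k(x_1)$ with $g_i\in G$, so $X\in F(n;g_1,\ldots,g_k)$. The key observation is now that the relevant data---the finite group $H=\langle g_1,\ldots,g_k\rangle$, the generating $k$-tuple $(g_1,\ldots,g_k)$, and the $n$-dimensional orthogonal representation of $H$ given by the inclusion $H\hookrightarrow O(n)$---ranges, over all \fp\ $k$-gons, through only countably many isomorphism types: there are countably many finite groups up to isomorphism, and each has only finitely many $k$-tuples of elements and only finitely many real representations of any given dimension up to isomorphism. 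Since isomorphic orthogonal representations of a finite group are conjugate by an element of $O(n)$ (take any intertwiner $u$; in its polar decomposition $u=OP$ the positive-definite part $P$ commutes with the representation, so the orthogonal part $O$ is itself an intertwiner), and since conjugation does not affect $F$, we may indeed take one model per isomorphism type, giving a countable list.

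The second step is to show that each $F(n;g_1,\ldots,g_k)$ has measure zero, and this is where Lemma~\ref{distances} does the real work. Choose a \emph{maximal} family $P^{(1)},\ldots,P^{(M)}$ of cyclic $k$-gons lying in $F(n;g_1,\ldots,g_k)$ whose planes are pairwise orthogonal, writing $x^{(a)}_1,\ldots,x^{(a)}_k$ for the vertices of $P^{(a)}$. This family is finite: pairwise orthogonal planes have pairwise orthogonal $2$-dimensional direction spaces, so $M\le n/2$, and here every cyclic $k$-gon does span a genuine plane as $k\ge 16$. Given any cyclic $k$-gon $y_1\ldots y_k=g_1(y)\ldots g_k(y)$ in $F(n;g_1,\ldots,g_k)$, maximality forces its plane to be non-orthogonal to that of $P^{(a)}$ for some $a$; writing $x^{(a)}_i=g_i(z)$, we have
\[
\|x^{(a)}_i-y_i\|=\|g_i(z)-g_i(y)\|=\|z-y\|\qquad(1\le i\le k),
\]
so hypotheses (i) and (ii) of Lemma~\ref{distances} hold with reference $k$-gon $P^{(a)}$, and hence $y_1\ldots y_k$ lies in the measure-zero set $\Q$ that the lemma provides. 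Thus $F(n;g_1,\ldots,g_k)$ is contained in a union of $M$ sets of measure zero, so has measure zero. Taking the union over the countably many models, the set of \fp\ cyclic $k$-gons has measure zero in $\p$; as $\p$ has positive measure, some cyclic $k$-gon fails to be \fp, and in particular there is a cyclic $16$-gon that is not \fp.

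The hard part is the first step. Because the transitive set in which a given \fp\ $k$-gon sits may have arbitrarily high dimension, one cannot reduce at the outset to a bounded-dimensional ambient space; the reduction to countably many models instead rests on passing to the finite group generated by the relevant symmetries and on the fact that there are only countably many finite subgroups of the orthogonal groups up to conjugacy. Once this is in place the second step is short: the finiteness of the pairwise-orthogonal family and the verification of hypothesis (i) of Lemma~\ref{distances} are both immediate, and Lemma~\ref{distances} itself supplies all the genuine geometric content.
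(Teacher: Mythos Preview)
Your proof is correct and follows essentially the same route as the paper: reduce to countably many models $(n;g_1,\ldots,g_k)$ via the countability of finite subgroups of orthogonal groups up to conjugacy, then for each model pick a maximal pairwise-orthogonal family of embedded $k$-gons and apply Lemma~\ref{distances}. The only cosmetic difference is that you justify orthogonal equivalence of isomorphic representations via polar decomposition of an intertwiner, whereas the paper invokes Schur's lemma and uniqueness of the invariant inner product; both arguments are standard and equally short.
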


\begin{proof}
Let ${\mathcal{S}}$ be the set of subtransitive cyclic $k$-gons.
Suppose $P\in{\mathcal{S}}$.  Then $P$ can be embedded
into some $\R^n$ as $y_1\ldots y_k$ in such a way that $y_i=g_i(y)$
($1\le i\le k$) for some $y\in\R^n$ and $g_1$, $g_2$, $\ldots\,$, 
$g_k$ elements of the orthogonal group $O(n)$ with
$\langle g_1,\ldots\,,g_k\rangle$ finite.

Fix such $n$ and $g_1$, $g_2$, $\ldots\,$, $g_k$.  For $y\in\R^n$,
write $\gv(y)$ for the $k$-gon $g_1(y)\ldots g_k(y)$.  Let ${\mathcal{S}}'$ be the
set of $P\in{\mathcal{S}}$ which can be embedded into $\R^n$ as
$\gv(y)=g_1(y)\ldots g_k(y)$.  Let $\gv(x_1)$, $\ldots\,$, $\gv(x_p)$ be
a (necessarily finite) maximal family of pairwise-orthogonal embeddings
of polygons from ${\mathcal{S}}'$ into $\R^n$.  Then if $\gv(y)$ is any embedding of
a polygon from ${\mathcal{S}}'$ into $\R^n$ there must be some $i$ such that
$\gv(y)$ and $\gv(x_i)$ are not orthogonal.  Write $x=x_i$.  As each $g_i$ is
an orthogonal map, we have $$\|g_1(x)-g_1(y)\|=\|g_2(x)-g_2(y)\|=\cdots=
\|g_k(x)-g_k(y)\|.$$ So by Lemma \ref{distances}, ${\mathcal{S}}'$ is a union of
finitely many sets each of measure zero, and hence ${\mathcal{S}}'$ has measure zero.

Now, a finite group $G$ has only countably many orthogonal representations,
up to conjugation by an orthogonal transformation. Indeed, every
orthogonal representation is a direct sum of irreducible representations.
and the group $G$ has only finitely many inequivelant irreducible
linear representations.  Moreover, two
irreducible orthogonal representations which are isomorphic by some linear
map are, in fact, isomorphic by an orthogonal map (because, for example,
by Schur's Lemma there is a unique $G$-invariant inner product on $\R^n$
up to multiplication by a scalar---for more details, see 
e.\,g.~Lemma 4.7.1 of~\cite{wolf}).

Hence the orthogonal groups $O(n)$ have only countably many distinct
finite subgroups (up to conjugation),
and given a finite subgroup of $O(n)$ there are only finitely many ways
to select from it a sequence of $k$ elements.  Thus ${\mathcal{S}}$ is a countable
union of sets of measure zero and so itself has measure zero.
\end{proof}

While our proof shows that almost every cyclic $16$-gon is not \fp, 
it does not provide an explicit construction.
We hope that such an explicit construction of a polygon $P$ might provide
some insight into proving that $P$ is not Ramsey.  We are therefore interested
in a solution to the following problem.

\begin{problem}
Give an explicit construction of a cyclic polygon that is not \fp.
\end{problem}

We also find it unlikely that it is necessary to go as far as $16$-gons:

\begin{conjecture}
There exists a cyclic quadrilateral that is not \fp.
\end{conjecture}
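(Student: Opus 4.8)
\medskip

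One natural line of attack is the following; we have not been able to complete it, but it seems the most promising. The plan is to mimic the proof of Theorem~\ref{inequiv}, replacing Lemma~\ref{distances} by a much sharper statement. As in that proof, a cyclic quadrilateral is \fp\ precisely when its vertices can be realised as $g_1(y),\ldots,g_4(y)$ for some finite $G\leqslant O(n)$, some $(g_1,\ldots,g_4)\in G^4$ and some $y\in\R^n$; and --- again as in \S\ref{gons} --- there are only countably many such data $(G,g_1,\ldots,g_4)$ up to orthogonal conjugacy. So it suffices to prove that, for each fixed datum, the set of shapes of the quadrilaterals $(g_1(y),\ldots,g_4(y))$, taken over all $y\in\R^n$ for which these four points are coplanar, is null in the four-dimensional space $\p$ of cyclic quadrilaterals.

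The next step is to pass to distances. Writing $a_{ij}=g_i^{-1}g_j$, we have $\|g_i(y)-g_j(y)\|^2 = 2\|y\|^2 - 2\langle a_{ij}(y),y\rangle$, so each of the six squared side- and diagonal-lengths of the quadrilateral is a quadratic form in $y$, and these six numbers determine its shape up to a finite ambiguity. Hence the shape-set above is, up to a finite-to-one map, the image of the quadratic map $F\colon\R^n\to\R^6$ given by these six forms, restricted to the locus $Z$ on which the relevant Cayley--Menger determinant --- the coplanarity condition --- vanishes. By Tarski--Seidenberg this image is a semialgebraic cone, contained in the four-dimensional subvariety $V\subset\R^6$ of distance-vectors of cyclic quadrilaterals; it would be enough to show that it has dimension at most $3$, that is, that it is not the whole of $V$. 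In the generic situation this is clear: coplanarity of four points of $\R^n$ is a condition of codimension $n-2$, so if the $n$-dimensional linear subspace $\{(g_1(y),\ldots,g_4(y)):y\in\R^n\}$ of $(\R^n)^4$ meets the coplanar locus transversely then $\dim Z = 2$ and $F(Z)$ has dimension at most $2$.

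The difficulty --- and this is exactly why the case of quadrilaterals lies beyond the reach of the argument of \S\ref{gons}, where the crude bound of $15$ coming from Lemma~\ref{distances} already beats $k$ once $k\geqslant16$ --- is the degenerate case, in which the subspace $\{(g_1(y),\ldots,g_4(y))\}$ lies inside the coplanar locus, so that the four points are coplanar for \emph{every} $y$ and $Z=\R^n$. Here one cannot win on the dimension of the domain, and must instead show directly that the shape still varies in at most three dimensions. I expect the degenerate data to be highly constrained: in the simplest cases --- for instance when $G$ acts on $\R^2$ by rotations --- the four points always lie on a circle whose arc-lengths are fixed by $(g_1,\ldots,g_4)$, so that only the circumradius varies and the shape-set is one-dimensional. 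The heart of a proof would thus be a classification of the degenerate data together with a uniform bound of $3$ (in fact, one expects, much less) on the dimension of the corresponding shape-set. An alternative, more arithmetic, route would be to note that for \fp\ data the six squared lengths all lie in a single number field generated by $\|y\|^2$ and roots of unity, and to conclude that the shapes fill out only a proper subvariety of $\p$; but pinning down such a subvariety of the correct low dimension runs into the same obstacle. Carrying out either programme would establish the conjecture.
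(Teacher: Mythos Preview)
The statement you are addressing is stated in the paper as a \emph{conjecture}, not a theorem: the authors give no proof, and indeed remark only that they ``believe that almost no cyclic quadrilateral should be subtransitive''. So there is no proof in the paper to compare against.

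Your proposal is likewise not a proof, as you yourself acknowledge: it is a sketch of a strategy modelled on the proof of Theorem~\ref{inequiv}, together with an honest identification of the point at which it breaks down. The reduction to countably many data $(G;g_1,\dots,g_4)$ and the passage to the quadratic distance map are sound, and your dimension count in the transverse case is correct (coplanarity of four points in $\R^n$ is a codimension-$(n-2)$ condition, so a transverse intersection with the $n$-dimensional parameter space leaves a $2$-dimensional locus, whose image in the $4$-dimensional space $\p$ is certainly null). The genuine obstacle is exactly the one you isolate: the \emph{degenerate} data for which $g_1(y),\dots,g_4(y)$ are coplanar for every $y$. In that regime there is no saving on the domain side, and one must argue directly that the shape map has image of dimension at most~$3$. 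Your heuristic that such data are ``highly constrained'' is plausible---your rotation example is a good one---but turning this into a classification, or even a uniform dimension bound, is precisely the missing idea, and neither you nor the paper supplies it. The arithmetic alternative you mention (squared distances lying in a field generated by $\|y\|^2$ and roots of unity) does not obviously yield a dimension constraint either, since that field can have arbitrarily large transcendence degree as $y$ varies.

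In short: the paper offers no proof, your proposal offers no proof, and the gap you identify is real and, as far as anyone knows, open.
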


Indeed, we believe that almost no cyclic quadrilateral should be \fp.

We remark that it is easy to check that all trapezia are \fp.  In fact,
\kriz~\cite{trapezia} showed that all trapezia are Ramsey.  This may also be
deduced from his general result in~\cite{kriz}.
\end{section}

\begin{section}{Are Ramsey sets subtransitive?}\label{conclude}
For this direction of our conjecture, we have no results at all---so in
this section we only mention a few heuristic ideas.

Given a Ramsey set $X$, why might there be a transitive set
containing it? There are certainly some `structured' sets containing 
$X$---namely the sets that are $k$-Ramsey (for some $k$) for $X$. Of course, it
is impossible that every such set is transitive, as we may always add points 
to a set that is $k$-Ramsey for $X$ to destroy any symmetry that is
present. So one should focus on the {\it minimal} $k$-Ramsey sets for $X$. 

To fix our ideas, let us consider the simplest possible case, when $X$ is
the set $\{ 0,1 \}$. What are the minimal $2$-Ramsey sets for $X$? For a 
finite set $S$ in $\R^n$, define the {\it graph} of $S$ to be the graph on
vertex-set $S$ in which we join two points if they are at unit distance.
Then the minimal $2$-Ramsey sets for $X$ are precisely those sets whose
graph is an odd cycle. Now, such a set can be very far from transitive: indeed,
it might have no isometries at all. However, there are two key points. The 
first is that, for any such set, we can transform it, preserving unit 
distances, to obtain a transitive set. The second, perhaps more important, is
that a {\it minimum-sized} such set has to be transitive, as it has to be
an equilateral triangle. And similarly for the sets that are $k$-Ramsey for
$X$: minimal such sets are sets whose graphs are $(k+1)$-critical (meaning that
they have chromatic number $k+1$ but the removal of any vertex decreases
the chromatic number), and the unique minimum-sized such set is the
regular simplex on $k+1$ vertices.

Similar phenomena seem to be present in other examples. We cannot
expect in general to focus only on sets that are $2$-Ramsey for a given set 
$X$:  it is certainly possible to find $X$ that is not Ramsey, but such that
there does exist a set $Y$ that is 2-Ramsey for $X$ 
(for example, $X=\{ 0,1,2 \}$---see~\cite{egmrss}). 
But this is perhaps not surprising, as of prime
importance will be how the copies of $X$ `fit together' inside the set $S$,
and one may need more colours to `encode' this information. For example,
it may be that one should look at the $k$-Ramsey sets for $X$, where $k$
is $2^{| X |}$---the intuitive idea being that the colouring of $S$ 
obtained by, for a point $s$ of 
$S$, listing those points of $X$ can map to $s$ in an embedding of $X$ into
$S$, might give key information about the structure of $S$. At any rate, we
wonder if the following is true.

\begin{problem}
Let $X$ be a Ramsey set. Must there exist a $k$ such
that every minimum-sized set that is $k$-Ramsey for $X$ is transitive?
\end{problem}

As an alternative, we suggest the following modification of this
question which is perhaps more approachable.  In place of a
minimum-sized set that is $k$-Ramsey for $X$, we might instead
consider a minimal set $k$-Ramsey for $X$ with the further property
that it cannot be transformed to contain additional copies of $X$
whilst retaining all those already present.

Finally, we consider an algorithmic question.  It is easy to see that one
can determine in finite time whether or not a given set is spherical.
However, it is not clear that it is possible to determine in finite
time whether or not a given set is subtransitive.  So
if our conjecture were true, it would leave open the problem of finding 
an algorithm to determine whether or not a given set is Ramsey.  So we ask:

\begin{problem}
Is there an algorithm for testing in finite time whether or not a given
set is \fp?
\end{problem}

\end{section}


\begin{thebibliography}{99}

\bibitem[1]{cantwell}
Cantwell, K., All regular polytopes are Ramsey.
{\it J.~Comb.~Theory~Ser.~A} {\bf114} (2007), 555--562.

\bibitem[2]{egmrss}
Erd\H os, P., Graham, R.~L., Montgomery, P.,  Rothschild, B.~L., 
Spencer, J., and Straus, E.~G., Euclidean Ramsey Theorems. I., {\it 
J.~Comb.~Theory~Ser.~A} {\bf14} (1973), 341--363.

\bibitem[3]{egmrss2}
Erd\H os, P., Graham, R.~L., Montgomery, P.,  Rothschild, B.~L., 
Spencer, J., and Straus, E.~G., Euclidean Ramsey Theorems. II,
{\it Infinite and Finite Sets, Vol.~I,} 529--557.  Colloq.~Math.~Soc.~Janos
Bolyai, Vol.~10, North-Holland, Amsterdam, 1975.

\bibitem[4]{egmrss3}
Erd\H os, P., Graham, R.~L., Montgomery, P.,  Rothschild, B.~L., 
Spencer, J., and Straus, E.~G., Euclidean Ramsey Theorems. III,
{\it Infinite and Finite Sets, Vol.~I,} 559--583.  Colloq.~Math.~Soc.~Janos
Bolyai, Vol.~10, North-Holland, Amsterdam, 1975.

\bibitem[5]{triangles}
Frankl, P., and \rodl, V, All triangles are Ramsey, {\it 
Trans.~Amer.~Math.~Soc.} {\bf297} (1986), no. 2, 777--779.

\bibitem[6]{simplices}
Frankl, P., and \rodl, V., A partition property of simplices in
Euclidean space, {\it J.~Amer.~Math.~Soc.} {\bf3} (1990), no.~1, 1--7.

\bibitem[7]{conj}
Graham, R.~L., Recent trends in Euclidean Ramsey theory, {\it Discrete
Math.} {\bf136} (1994), no.~1--3, 119--127.

\bibitem[8]{book}
Graham, R.~L., Rothschild, B.~L., and Spencer, J.~H., Ramsey Theory,
John Wiley \& Sons, Inc., New York, 1980.

\bibitem[9]{hjt}
Hales, A.~W., and Jewett, R.~I., Regularity and positional games,
{\it Trans.~Amer.~Math.~Soc.} {\bf106} (1963), 222--229.

\bibitem[10]{johnson}
Johnson, F.~E.~A., Finite subtransitive sets, {\it Math.~Proc.~Cambridge
Philos.~Soc.} {\bf140} (2006), no.~1, 37--46.

\bibitem[11]{kriz}
\kriz, I., Permutation groups in Euclidean Ramsey Theory, 
{\it Proc.~Amer.~Math.~Soc.} {\bf112} (1991), no.~3, 899--907.

\bibitem[12]{trapezia}
\kriz, I., All trapezoids are Ramsey,  
{\it Discrete Math.} {\bf108} (1992), no.~1--3, 59--62.

\bibitem[13]{sard}
Milnor, J., Topology from the Differentiable Viewpoint,
The University Press of Virginia, Charlottesville, Va, 1965.

\bibitem[14]{vdw}
van der Waerden, B.~L., Beweis einer Baudet'schen Vermutung,
{\it Nieuw.~Arch.~Wisk.} {\bf15} (1927), 212--216.

\bibitem[15]{wolf}
Wolf, J.~A., Spaces of Constant Curvature, McGraw Hill, 1967.
\end{thebibliography}
\end{document}